\numberwithin{equation}{section}
\newtheorem{thm}{Theorem}[section]
\newtheorem*{thmA}{Theorem A}
\newtheorem*{corB}{Corollary B}
\newtheorem*{corC}{Corollary C}
\newtheorem*{thmD}{Theorem D}
\newtheorem*{thmE}{Theorem E}
\newtheorem*{mainp}{Main Problem}
\newtheorem{lem}[thm]{Lemma}
\newtheorem{cor}[thm]{Corollary}
\newtheorem{prop}[thm]{Proposition}
\theoremstyle{definition}
\newtheorem{pb}{Problem}[section]
\theoremstyle{remark}
\newtheorem{rem}[thm]{Remark}
\newcommand{\sbar}{\ensuremath{\bar{s}}}
\newcommand{\nbar}{\ensuremath{\bar{n}}}
\newcommand{\mbar}{\ensuremath{\bar{m}}}
\newcommand{\ubar}{\ensuremath{\bar{u}}}
\newcommand{\bn}{\ensuremath{\mathbb N}}
\newcommand{\bm}{\ensuremath{\mathbb M}}
\newcommand{\br}{\ensuremath{\mathbb R}}
\newcommand{\cQ}{\ensuremath{\mathcal Q}}
\newcommand{\supp}{\operatorname{supp}}
\newcommand{\symdif}{\bigtriangleup}
\newcommand{\ie}{\textit{i.e.,}\ }
\newcommand{\xn}{\ensuremath{(x_n)_{n=1}^\infty}}
\newcommand{\yn}{\ensuremath{(y_n)_{n=1}^\infty}}
\newcommand{\zn}{\ensuremath{(z_n)_{n=1}^\infty}}
\newcommand{\ei}{\ensuremath{(e_i)_{i=1}^\infty}}
\newcommand{\ej}{\ensuremath{(e_j)_{j=1}^\infty}}
\newcommand{\co}{\mathrm{c}_0}
\newcommand{\coo}{\mathrm{c}_{00}}
\newcommand{\sumspace}[4]{\ensuremath{\left(\sum_{#1}^{#2}#3\right)_{#4}}}
\newcommand{\fin}[1]{{[#1]}^{<\omega}}
\newcommand{\sizelek}[1]{{[#1]}^{\leq k}}
\newcommand{\sizeeqk}[1]{{[#1]}^{k}}
\newcommand{\infin}[1]{{[#1]}^{\omega}}
\newcommand{\lip}{\ensuremath{\mathrm{Lip}}}
\newcommand{\diam}{\ensuremath{\mathrm{diam}}}
\newcommand{\vep}{\varepsilon}
\newcommand{\ds}{\displaystyle}
\begin{document}

\title[The coarse geometry of Tsirelson's space and applications]{The coarse geometry of Tsirelson's space and applications}

\author{F.~Baudier}
\address{F.~Baudier, Department of Mathematics, Texas A\&M University, College Station, TX 77843, USA}
\email{florent@math.tamu.edu}
\thanks{The first named author was partially supported by the National Science Foundation under Grant Number DMS-1565826.}

\author{G.~Lancien}
\address{G.~Lancien, Laboratoire de Math\'ematiques de Besan\c con,  CNRS UMR-6623, Universit\'e Bourgogne Franche-Comt\'e, 16 route de  Gray, 25030 Besan\c con C\'edex, France}
\email{gilles.lancien@univ-fcomte.fr}
\thanks{The second named author was supported by the French ``Investissements d'Avenir'' program, project ISITE-BFC (contract
 ANR-15-IDEX-03) and as a participant of the ``NSF Workshop in Analysis and Probability'' at Texas A\&M University.}

\author{Th.~Schlumprecht}
\address{Th.~Schlumprecht, Department of Mathematics, Texas A\&M University, College Station, TX 77843-3368, USA, and Faculty of Electrical Engineering,
Czech Technical University in Prague, Zikova 4, 16627, Prague, Czech Republic
}
\email{schlump@math.tamu.edu}
\date{}

\thanks{The third named author was supported by the National Science Foundation under Grant Number DMS-1464713.}
\keywords{}
\subjclass[2010]{46B20, 46B85, 46T99, 05C63, 20F65}

\maketitle

\begin{abstract} The main result of this article is a rigidity result pertaining to the spreading model structure for Banach spaces coarsely embeddable into Tsirelson's original space $T^*$. Every Banach space that is coarsely embeddable into $T^*$ must be reflexive and all its spreading models must be isomorphic to $\co$. Several important consequences follow from our rigidity result. We obtain a coarse version of an influential theorem of Tsirelson: $T^*$ does not coarsely contain $\co$ nor $\ell_p$ for $p\in[1,\infty)$. We show that there is no infinite dimensional Banach space that coarsely embeds into every infinite dimensional Banach space. In particular, we disprove the conjecture that the separable infinite dimensional Hilbert space coarsely embeds into every infinite dimensional Banach space. The rigidity result follows from a new concentration inequality for Lipschitz maps on the infinite Hamming graphs and taking values in $T^*$, and from the embeddability of the infinite Hamming graphs into Banach spaces that admit spreading models not isomorphic to $\co$. Also, a purely metric characterization of finite dimensionality is obtained.
\end{abstract}

\tableofcontents

\section{Introduction}

\subsection{Motivation}
A natural and powerful approach to classify mathematical objects in a given category, is to discover properties that are invariant with respect to the isomorphisms of the category. The notion of topological dimension, which allows us to distinguish between $\br^2$ and $\br^3$ as topological spaces, is a prime example. These invariants can take various forms, and in metric geometry one usually seeks for properties that are stable under embeddings of a certain type, e.g. Lipschitz, uniform, or coarse. In this article we are concerned with a coarse invariant in the form of a concentration inequality for Lipschitz maps defined on infinite graphs with values into certain infinite dimensional Banach spaces.

\medskip

Discovering useful and tractable new concentration inequalities is usually a challenge. In the Lipschitz category, several fundamental inequalities have been discovered, partially motivated by the investigation of the Ribe program and its applications in geometric group theory or theoretical computer science. We refer to the survey of A. Naor \cite{Naor2012} for an enlightening account of this theory, and to the recent work of Naor and Schechtman (\cite{NSPi2016}, \cite{NSFourier2016} and \cite{NaorAnnals2016}) where fundamental and powerful new inequalities were proved.

\medskip

Strong interest in the coarse geometry of infinite dimensional Banach spaces arose in connection with the Novikov conjecture in Topology and a coarse version of the Baum-Connes conjecture in Noncommutative Geometry (see \cite{NowakYu2012}, \cite{Valette2002}, \cite{FerryRanickiRosenberg1995} and \cite{Rosenberg2016}). A very efficient and powerful technique for proving the coarse geometric Novikov conjecture for a bounded geometry metric space is to coarsely embed this space into a Banach space with some strong enough convexity and smoothness properties. Indeed, G. Yu \cite{Yu2000} showed that a discrete metric space with bounded geometry that is coarsely embeddable into a Hilbert space satisfies the coarse geometric Novikov conjecture. This result was later generalized by Kasparov and Yu \cite{KasparovYu2006} who proved that the conclusion holds if the space is merely coarsely embeddable into a super-reflexive Banach space. It is worth mentioning that the coarse geometric Novikov conjecture implies Gromov's conjecture that a uniformly contractible Riemannian manifold with bounded geometry cannot have uniformly positive scalar curvature, and the zero-in-the spectrum conjecture stating that the Laplace operator acting on the space of all $L^2$-forms of a uniformly contractible Riemannian manifold has zero in its spectrum.  Consequently, whether the separable Hilbert space is the Banach space into which it is the hardest to embed, became a very natural and intriguing question. More precisely, the following problem was raised (Problem 14 in \cite{GLZ2014}, Problem 11.17 in \cite{Ostrovskiibook}).

\begin{mainp}\label{l2pb}
Does $\ell_2$ coarsely embed into every infinite dimensional Banach space?
\end{mainp}

There is rather strong evidence that $\ell_2$ is the space into which it is the most difficult to embed. It was shown in \cite{Ostrovskii2009} that every locally finite metric subset of $\ell_2$ admits a bi-Lipschitz embedding into every infinite dimensional Banach space, and in \cite{BaudierLancien2015} that every proper subset of $\ell_2$ (i.e. whose closed balls are compact) is almost Lipschitz embeddable into every infinite dimensional Banach space. Both proofs use Dvoretzky's theorem \cite{Dvoretzky1961} and the barycentric gluing technique introduced in \cite{Baudier2007}. Moreover, using Gaussian random variables one can embed $\ell_2$ linearly isometrically into $L_p[0,1]$ for all $p\in[1,\infty)$. Nowak \cite{Nowak2006} proved that $\ell_2$ coarsely embeds into $\ell_p$ for all $p\in[1,\infty)$ using Mazur maps. This technique was extended by Ostrovskii \cite{Ostrovskii2009} to embed $\ell_2$ coarsely into every Banach space with an unconditional basis and non-trivial cotype using the Mazur maps constructed by Odell and Schlumprecht in their solution to the distortion problem \cite{OdellSchlumprecht1994}. There are also Banach spaces with trivial cotype such as $\sumspace{n=1}{\infty}{\ell_\infty^n}{\ell_p}$, or $\co$ (which is a universal space for separable spaces and bi-Lipschitz embeddings \cite{Aharoni1974}) that contain coarsely $\ell_2$.

\medskip

On the other hand, useful coarse invariants, such as asymptotic dimension or finite decomposition complexity, are not well suited to study the coarse geometry of infinite dimensional Banach spaces, and the number of properties at our disposal to prevent coarse embeddability between infinite dimensional Banach spaces is rather limited. There are essentially three obstructions. The first obstruction was discovered by Johnson and Randrianarivony \cite{JohnsonRandrianarivony2006} when they showed that $\ell_q$ does not coarsely embed into $\ell_2$ when $q>2$. Their approach relies heavily upon the fact that the target space is $\ell_2$. The second obstruction is the metric cotype of Mendel and Naor \cite{MendelNaor2008}. For instance, the metric cotype argument can be used to rule out the coarse embeddability of $\ell_q$ into $\ell_p$ when $q>\max\{p,2\}$. The last one is Kalton's property $\cQ$ \cite{Kalton2007} which serves as an obstruction to coarse embeddability into reflexive (or stable) Banach spaces\footnote{A variant of Kalton's property $\cQ$ can be useful in the non-separable setting \cite{KaltonFM11}.}. Since $\ell_2$ has the smallest possible cotype allowed for a Banach space, and is both reflexive and stable, none of the above obstructions can provide a negative answer to the main problem.

\medskip

In this article the main problem is answered negatively, using the following approach. We show that there exists an infinite dimensional Banach space in which the sequence $(H_k^\omega)_{k\ge 1}$ of $k$-dimensional Hamming graphs (over a countable set) does not equi-coarsely embed. Since it is easy, and well known, that $(H_k^\omega)_{k\ge 1}$ equi-coarsely embeds into $\ell_2$, the claim follows. Our work is inspired by a concentration inequality of Kalton and Randrianarivony \cite{KaltonRandrianarivony2008}. They proved the following: Assume that  $f$ is a Lipschitz map from $H_k^\omega$ into a reflexive  Banach space $Y$, for which  there exists $p\in (1,\infty)$, such that for any $y$ in $Y$ and any weakly null sequence $\yn$ in $Y$ we have
\begin{equation}\label{E:1}
\limsup_{n\to\infty}\|y+y_n\|_Y^p\le \|y\|_Y^p+\limsup_{n\to\infty}\|y_n\|_Y^p.
\end{equation}
Then for every two vertices $\mbar,\nbar$ in some subgraph isometric to $H_k^\omega$, one has
\begin{equation}\label{E:2}
\|f(\mbar)-f(\nbar)\|_Y\le 3\lip(f)k^{1/p}.
\end{equation}
Since  $\diam(H_k^\omega)=k$, inequality \eqref{E:2} says that a Lipschitz map on $H_k^\omega$ concentrates on a ``full'' subgraph. This concentration inequality was originally used by Kalton and Randrianarivony as an obstruction to coarse Lipschitz embeddability, but also provides information on compression exponents (cf. \cite{BaudierJTA2016} and \cite{BragaSM2017}). Reflexivity is crucial to obtain inequality \eqref{E:2}. Indeed, since $c_0$ is Lipschitz universal,  inequality \eqref{E:2} cannot hold for Lipschitz maps with values in $\co$, even though $\co$ satisfies inequality $\eqref{E:1}$ for every $p\in(1,\infty)$. In order to prevent coarse embeddability one would like to have for some $C\in(0,\infty)$, which does not depend on $k$,  the stronger inequality
\begin{equation}\label{E:3}
\|f(\mbar)-f(\nbar)\|\le C\lip(f).
\end{equation}

If we could find a reflexive Banach space $Y$ satisfying inequality \eqref{E:2} for $p=\infty$ that is,
\begin{equation}\label{E:4}
\limsup_{n\to\infty}\|y+y_n\|\le \max\big\{\|y\|,\limsup_{n\to\infty}\|y_n\|\big\},
\end{equation}
then the concentration inequality \eqref{E:3} would hold for $Y$. Unfortunately, if a Banach space $Y$ satisfies inequality \eqref{E:4} then $Y$ must contain an isomorphic copy of $\co$ (see \cite{KaltonWerner1995}). Thus it cannot be reflexive and moreover it contains a bi-Lipschitz copy of every separable metric space, which precludes any concentration inequality. As we shall see, this transition phase phenomenon is not necessarily an obstruction. There are actually Banach spaces that have a ``$\co$-like'' asymptotic  smoothness property which is strong enough to obtain the concentration inequality \eqref{E:3} but weak enough to coexist with reflexivity. A prime example of such space is Tsirelson's original space. 

\medskip

Tsirelson's original space $T^*$ was the first example of a Banach space that does not contain any isomorphic copies of $\ell_p$ or $\co$ \cite{Tsirelson1974}. The validity of the concentration inequality \eqref{E:3} for $T^*$ is pivotal to prove an unexpected rigidity result which in turn implies that coarse and uniform versions of Tsirelson's theorem hold.

\subsection{Main results}
All unexplained notation can be found in Section \ref{S:2}. The main result of this article is the following rigidity result pertaining to the spreading model structure of Banach spaces coarsely embeddable into Tsirelson's original space $T^*$.

\begin{thmA}
Let $X$ be an infinite dimensional Banach space. If $X$ coarsely embeds into $T^*$, or if its unit ball $B_X$ uniformly embeds into $T^*$, then $X$ is reflexive and all its spreading models are isomorphic to $\co$.
\end{thmA}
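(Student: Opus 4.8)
The plan is to split Theorem A into the two asserted conclusions — reflexivity, and the identification of all spreading models with $\co$ — and to derive both from the concentration inequality \eqref{E:3} for $T^*$, which I take as the central tool established earlier in the paper (applied to Lipschitz maps on the Hamming graphs $H_k^\omega$ with values in $T^*$), together with the dichotomy between $\co$-like asymptotic smoothness and the existence of non-$\co$ spreading models.

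\emph{Step 1: $X$ cannot contain $\co$, and in fact has no spreading model isomorphic to a non-$\co$ sequence.} The key input is the implication already flagged in the introduction: if a Banach space $X$ admits a normalized spreading model $(\tilde e_i)$ that is \emph{not} equivalent to the unit vector basis of $\co$, then the sequence $(H_k^\omega)_{k\ge1}$ equi-coarsely (indeed equi-Lipschitz, after passing to an appropriate net) embeds into $X$. Concretely, one uses a normalized weakly null sequence in $X$ generating such a spreading model and forms, for $\bar n=\{n_1<\dots<n_k\}$, the vector $\sum_{j} x_{n_j}$ (a Kalton--Randrianarivony-type map); the lower and upper spreading-model estimates control distances of edges and of far-apart vertices, and non-equivalence to $\co$ forces the image to have diameter growing with $k$, contradicting \eqref{E:3} once $X$ is assumed coarsely (or ball-uniformly) embeddable into $T^*$ — because composing the graph embedding into $X$ with the coarse embedding of $X$ into $T^*$ yields a map to which \eqref{E:3} applies. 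For the uniform-embedding-of-$B_X$ version one first rescales: a uniform embedding of $B_X$ into $T^*$ upgrades, by the standard ``gluing'' argument on the lattice of dilates, to a coarse embedding of $X$ itself into $T^*$, so the two hypotheses feed into the same mechanism.

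\emph{Step 2: Every spreading model of $X$ is isomorphic to $\co$.} Step 1 already says no spreading model can be non-$\co$; what remains is to confirm that $X$ \emph{has} spreading models, which is automatic for infinite-dimensional $X$ by Brunel--Sucheston, so in fact \emph{all} of them are $\co$. I would also record here the companion fact that $X$ contains no isomorphic copy of $\ell_p$ for any $p\in[1,\infty)$ nor of $\co$: a copy of $\ell_p$ would generate an $\ell_p$ spreading model (contradiction), and a copy of $\co$ would make $X$ non-reflexive and would also, being Lipschitz-universal, destroy any concentration inequality inherited from \eqref{E:3}.

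\emph{Step 3: $X$ is reflexive.} By James's theorem it suffices to rule out isomorphic copies of $\co$ and of $\ell_1$ in $X$; equivalently, by the classical characterization, to show that the basis-type asymptotic structure is both boundedly complete and shrinking. Copies of $\co$ and $\ell_1$ are excluded by Step 2 (they would give $\co$ or $\ell_1$ spreading models). For a clean argument I would instead argue directly: a non-reflexive space contains, for every $\varepsilon>0$, either an $\varepsilon$-$\co$ or an $\varepsilon$-$\ell_1$ finite block-basis pattern of arbitrary length with a weakly null generating sequence (a James-type extraction), hence — via the same spreading-model extraction — a spreading model that is either $\co$ (allowed) or $\ell_1$ (excluded); so the only obstruction to reflexivity would be a $\co$ spreading model arising from a genuine $\co$-copy. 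That last possibility is killed precisely because $T^*$ is reflexive and reflexivity passes to the relevant asymptotic structure under coarse embeddings via the concentration inequality: if $X\supseteq\co$ isometrically (up to $2$), then $X$ is Lipschitz-universal on separable metric spaces, so the Hamming graphs embed equi-bi-Lipschitzly into $X$, again contradicting \eqref{E:3}. Thus $X$ is reflexive.

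\emph{Main obstacle.} The substantive point is \emph{Step 1}, and within it the construction and analysis of the graph embedding: one must produce, from a non-$\co$ spreading model, maps $f_k\colon H_k^\omega\to X$ that are simultaneously Lipschitz with a uniform constant \emph{and} expanding with distortion tending to infinity in $k$ (so that the uniform bound \eqref{E:3} on the composition with $X\hookrightarrow T^*$ is genuinely violated). The delicate part is the lower estimate — showing that vertices at Hamming distance $k$ are sent to points at distance $\gtrsim$ (something growing) — which requires exploiting that a spreading model not equivalent to $\co$'s basis has unbounded partial sums, and controlling the interaction between the block structure of the $x_{n_j}$'s and the metric on $H_k^\omega$ along paths. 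Everything else (the Brunel--Sucheston extraction, the James dichotomy for non-reflexivity, the rescaling of a uniform embedding of $B_X$ into a coarse embedding of $X$, and the Lipschitz-universality of $\co$) is standard and can be cited.
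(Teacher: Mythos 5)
Your Step 1 correctly captures the paper's core mechanism for the coarse case: a non-$\co$ spreading model generated by a weakly null sequence yields maps from $H_k^\omega$ into $X$ that are uniformly Lipschitz but whose compression grows with $k$ (Lemma~\ref{L:3.1}, Corollary~\ref{C:3.2}), and composing with a coarse embedding into $T^*$ contradicts Theorem~\ref{CI}. However, there are two genuine gaps.

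\textbf{The uniform case is not a rescaling of the coarse case.} There is no standard ``gluing on the lattice of dilates'' that upgrades a uniform embedding of $B_X$ into $T^*$ to a coarse embedding of $X$ into $T^*$; such an upgrade is not a known general fact and would itself require proof. The paper treats the two hypotheses in parallel but separately. In the uniform case it scales the Kalton--Randrianarivony map \emph{into} $B_X$: from a weakly null sequence with $\psi(k)=\inf\|\sum_{i\le k}\vep_i x_{n_i}\|\to\infty$, it maps $([\bn]^k,\tfrac{d_J}{k})$ to $T^*$ via $\bar n\mapsto g\bigl(\psi(2k)^{-1}\sum_{i\le k}x_{n_i}\bigr)$ and plays the concentration inequality against $\omega_g(t)\to 0$ as $t\to 0$. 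Your proposal never engages with this and would need to.

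\textbf{Your reflexivity argument is both circular and based on a false dichotomy.} You claim a non-reflexive space contains, for every $\varepsilon>0$, either $\varepsilon$-$\co$ or $\varepsilon$-$\ell_1$ patterns of arbitrary length with weakly null generating sequences. This is James's theorem for spaces \emph{with an unconditional basis}; it fails for general $X$ (e.g.\ James's space $J$ is non-reflexive and contains neither $\co$ nor $\ell_1$). Moreover, even where $\ell_1$ is present, the relevant spreading model is generated by a sequence that is not weakly null (Schur property), so Lemma~\ref{L:3.1}/Corollary~\ref{C:3.2} do not apply, making the attempted reduction circular. The paper avoids both problems by proving reflexivity \emph{first and independently of spreading models} (Proposition~\ref{P:12}): James's 1972 characterization of non-reflexivity extracts $(x_n)\subset B_X$ with $\|\sum_{i\le k}x_{n_i}-\sum_{k<i\le 2k}x_{n_i}\|\ge k/2$, and the $2$-Lipschitz map $\bar n\mapsto\sum_{i\le k}x_{n_i}$ from $H_k^\omega$ into $X$, composed with the embedding into $T^*$, directly contradicts Theorem~\ref{CI}. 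Only \emph{after} reflexivity is established does the paper invoke Rosenthal's $\ell_1$-theorem to guarantee that every spreading model of $X$ is generated by a weakly null sequence, which licenses the spreading-model step. You should reverse the order of your Steps 2 and 3 and replace the reflexivity argument accordingly.
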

 
Our rigidity result shows that Tsirelson's construction is actually extremely robust in the sense that nonlinear versions of Tsirelson's theorem hold. Indeed, Corollary B below is an immediate consequence of Theorem A since for $p\in(1,\infty)$ the canonical basis of $\ell_p$ generates a spreading model that is linearly isometric to $\ell_p$, while $\ell_1$ and $\co$ are not even reflexive.

\begin{corB}
$\co$ and $\ell_p$ for $p\in[1,\infty)$ (resp. $B_{\co}$ and $B_{\ell_p}$) do not coarsely embed (resp. uniformly embed) into $T^*$.
\end{corB}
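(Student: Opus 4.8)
The plan is to obtain Corollary B as a formal consequence of Theorem A, by checking that each of the listed spaces violates one of the two conclusions of that theorem: reflexivity, or the requirement that every spreading model be isomorphic to $\co$.

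First I would dispose of the non-reflexive cases. Both $\co$ and $\ell_1$ fail to be reflexive, so Theorem A immediately forbids a coarse embedding of either into $T^*$, and likewise forbids a uniform embedding of $B_{\co}$ or $B_{\ell_1}$ into $T^*$ --- in each case such an embedding would force the ambient space to be reflexive, a contradiction. For $\ell_p$ with $p\in(1,\infty)$ the space is reflexive, so this first obstruction is unavailable and one must argue with spreading models. Here the key (and completely standard) point is that the canonical unit vector basis $\en$ of $\ell_p$ is $1$-spreading: for all finitely supported scalars $(a_i)$ and all $n_1<n_2<\cdots$,
\[
\Big\|\sum_i a_i e_{n_i}\Big\|_{\ell_p}=\Big(\sum_i|a_i|^p\Big)^{1/p}=\Big\|\sum_i a_i e_i\Big\|_{\ell_p},
\]
so $\en$ itself generates a spreading model linearly isometric to the unit vector basis of $\ell_p$. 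If $\ell_p$ coarsely embedded into $T^*$, or if $B_{\ell_p}$ uniformly embedded into $T^*$, Theorem A would force this spreading model, and hence $\ell_p$, to be isomorphic to $\co$; but $\ell_p$ is reflexive and $\co$ is not, a contradiction. Therefore $\ell_p$ does not coarsely embed into $T^*$ and $B_{\ell_p}$ does not uniformly embed into $T^*$.

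I do not anticipate a genuine obstacle: the whole weight of the argument rests on Theorem A, and the remaining ingredients are elementary --- the non-reflexivity of $\co$ and $\ell_1$, the fact that the unit vector basis of $\ell_p$ is its own spreading model, and the trivial observation that $\ell_p\not\cong\co$ for $p\in(1,\infty)$ (one being reflexive, the other not).
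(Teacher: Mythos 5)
Your proof is correct and follows the paper's own approach essentially verbatim: the paper also derives Corollary B immediately from Theorem A by observing that $\co$ and $\ell_1$ are not reflexive, while for $p\in(1,\infty)$ the canonical basis of $\ell_p$ generates a spreading model linearly isometric to $\ell_p$, which is not isomorphic to $\co$.
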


We will say that a Banach space is \textit{coarsely minimal} if it coarsely embeds into every infinite dimensional Banach space. The Main Problem asks whether $\ell_2$ is coarsely minimal. In fact, Theorem A provides a much stronger negative solution to the Main Problem. Indeed, a coarsely minimal Banach space embeds into $\ell_2$ and it must have non-trivial cotype using Mendel and Naor metric cotype notion \cite{MendelNaor2008}, but it also embeds into $T^*$ and by Theorem A it must have trivial cotype, a contradiction.

\begin{corC}
There is no coarsely minimal infinite dimensional Banach space.
\end{corC}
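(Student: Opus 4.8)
The plan is to obtain Corollary C as a direct consequence of Theorem A together with the metric cotype obstruction of Mendel and Naor \cite{MendelNaor2008}; the point is that coarse minimality would force a single Banach space into two mutually incompatible regimes. Suppose, toward a contradiction, that $X$ is an infinite dimensional Banach space that coarsely embeds into every infinite dimensional Banach space.

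Apply this first with the target $\ell_2$. Since $\ell_2$ has (Rademacher, hence metric) cotype $2$, the metric cotype theory of Mendel and Naor \cite{MendelNaor2008} shows that any Banach space coarsely embeddable into $\ell_2$ must have nontrivial Rademacher cotype; in particular $X$ has nontrivial cotype. On the other hand, applying coarse minimality with the target $T^*$, Theorem A gives that $X$ is reflexive and that every spreading model of $X$ is isomorphic to $\co$. These two conclusions collide: nontrivial cotype passes to spreading models by the standard averaging argument (the cotype inequality for finitely many vectors survives the $\limsup$ defining a spreading model), so if $X$ had cotype $q<\infty$ then every spreading model of $X$ would have cotype $q$; but $\co$ has trivial cotype, so no spreading model of $X$ can be isomorphic to $\co$. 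Thus $X$ would have to have trivial cotype and nontrivial cotype simultaneously, which is absurd. Hence no coarsely minimal infinite dimensional Banach space exists.

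The substantive input here is Theorem A, which is already available, while the remaining ingredients — the definition of coarse minimality, the fact that cotype is inherited by spreading models, and the triviality of the cotype of $\co$ — are routine. The only point that genuinely requires invoking an external result with some care is that metric cotype, equivalently nontrivial Rademacher cotype, is an honest obstruction to coarse embeddability into a space of nontrivial cotype; this is precisely what the metric cotype theory of \cite{MendelNaor2008} supplies, so I do not expect any real difficulty beyond citing it correctly.
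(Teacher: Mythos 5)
Your argument is essentially identical to the paper's own proof of Corollary C: both use coarse minimality twice, first with target $\ell_2$ and the Mendel--Naor metric cotype obstruction to conclude that $X$ has nontrivial cotype, then with target $T^*$ and Theorem A to conclude that all spreading models of $X$ are isomorphic to $\co$ and hence that $X$ has trivial cotype. The only difference is that you spell out the (standard) intermediate step that nontrivial cotype passes to spreading models while $\co$ has trivial cotype, which the paper leaves implicit; this is a welcome clarification but not a new route.
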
 

The proof of Theorem A relies essentially on two ingredients. The main ingredient, which is presented in Section \ref{S:4}, is a new concentration inequality for Lipschitz maps on the $k$-dimensional infinite Hamming graph and taking values into Tsirelson's original space $T^*$.

\begin{thmD}
Let $k\in \bn$ and $f\colon (\sizeeqk{\bn},d_H)\to T^*$ be a Lipschitz map. Then there exists $\bm\in[\bn]^\omega$ such that for all $\mbar,\nbar\in[\bm]^k$ one has
\begin{equation}\label{E:1.5}
\|f(\mbar)-f(\nbar)\|\le 5\lip(f).
\end{equation}
\end{thmD}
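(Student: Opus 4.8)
The target space $T^*$ is the dual of Tsirelson's space $T$, so it has a $1$-unconditional basis $(t_n^*)$ which is shrinking, and — crucially — it is asymptotically $c_0$ in a quantitative sense: for any normalized block sequence $(y_j)$ supported ``far enough out,'' the norm $\|\sum a_j y_j\|$ is comparable (with a universal constant) to $\max_j |a_j|$, provided the number of blocks is controlled by where the supports begin. More precisely, if $N\le \min\operatorname{supp}(y_1)$ and $y_1<y_2<\dots<y_N$ is a block sequence, then $\|\sum_{j=1}^N a_j y_j\|\le 2\max_j|a_j|$. This is the ``$c_0$-like'' asymptotic smoothness alluded to in the introduction, and it replaces the use of inequality \eqref{E:4} (which would force a copy of $c_0$) by an inequality that only holds for \emph{admissible} (in the Tsirelson sense) numbers of vectors, and is therefore compatible with reflexivity.

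**Main steps.** First I would fix a Lipschitz map $f\colon([\bn]^k,d_H)\to T^*$ and, by passing to a subset $\bm_0\in[\bn]^\omega$, stabilize $f$: using a Ramsey/weak-compactness argument (as in the Kalton--Randrianarivony scheme, exploiting that $T^*$ has a shrinking basis so bounded sequences have weakly convergent subsequences after refining the index set) one extracts, for each ``position'' $i\in\{1,\dots,k\}$, a weak limit so that $f(\mbar)$ depends coordinatewise and ``continuously'' on the entries of $\mbar$. Concretely, I would arrange that along a subsequence, moving the $i$-th coordinate of a tuple $\mbar\in[\bm_0]^k$ from a value $m_i$ to a much larger value produces a perturbation of $f(\mbar)$ that is, up to small error, a fixed weakly-null-type increment supported on later and later basis vectors. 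Iterating over the $k$ coordinates and telescoping, one writes $f(\mbar)-f(\nbar)$, for $\mbar,\nbar\in[\bm]^k$ with $\bm$ a further refinement, as a sum of at most $2k$ (or so) ``increment vectors,'' each of norm $\le \lip(f)$ plus an arbitrarily small error, where the $j$-th increment has support starting beyond some index $N_j$ that we are free to make as large as we like — in particular larger than $2k$. Then the asymptotic-$c_0$ estimate for $T^*$ bounds the norm of this sum of $\le 2k$ admissible blocks by a universal multiple of $\lip(f)$; being careful with the constants (each increment has norm essentially $\le \lip(f)$ since adjacent tuples in the Hamming metric differ by distance $2$, giving the factor that lands at $5$) yields \eqref{E:1.5}.

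**The main obstacle.** The delicate point is the stabilization: one must refine $\bn$ finitely many times (once per coordinate, or via a single iterated Ramsey argument on $[\bn]^{2k}$) so that \emph{simultaneously} (i) each coordinate-increment vector is close to a vector that is ``small at the beginning'' (i.e. its leading support can be pushed arbitrarily far right by going to a deeper subset), and (ii) the number of such increments, namely roughly $2k$, is dominated by the starting index of \emph{every} increment, so that Tsirelson-admissibility kicks in and the $c_0$-estimate applies to the whole telescoped sum at once. Making (i) and (ii) compatible — and keeping the accumulated error below the gap between the true constant and $5$ — is where the real work lies; the unconditionality and shrinking property of the $T^*$ basis are what make the weak-limit extractions legitimate, and the specific combinatorics of the norm of $T$ (the ``$\frac12$ of an admissible sum'' defining norm) is what converts ``$2k$ far-out blocks'' into the universal bound. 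Everything else — the telescoping identity, the $d_H$-Lipschitz bookkeeping, the diagonal choice of the final $\bm\in[\bn]^\omega$ — is routine once the stabilized picture is in place.
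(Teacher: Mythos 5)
Your overall strategy is the same as the paper's: stabilize $f$ by weak compactness/Ramsey and a gliding hump to obtain, on a deep subset, a decomposition of $f(\mbar)$ into a fixed center plus $k$ successive blocks starting beyond index $k$, each of norm $\le\lip(f)$; then invoke the Tsirelson admissibility estimate \eqref{E:2.13} (this is exactly Proposition~\ref{mainprop} followed by Theorem~\ref{CI} in the paper). The ingredients you isolate are the right ones.

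There is, however, a genuine flaw in the way you plan to finish. You propose to telescope $f(\mbar)-f(\nbar)$ into roughly $2k$ ``admissible blocks'' and bound the whole sum at once by the $c_0$-like estimate. But the increments $y_{\mbar}^{(1)}\prec\dots\prec y_{\mbar}^{(k)}$ and $y_{\nbar}^{(1)}\prec\dots\prec y_{\nbar}^{(k)}$ depend on $\mbar$ and $\nbar$ respectively: each group of $k$ vectors is a block sequence, but the supports of $y_{\mbar}^{(i)}$ and $y_{\nbar}^{(j)}$ can interleave arbitrarily. So the union of the two lists is \emph{not} a block sequence, and \eqref{E:2.13} cannot be applied to all $2k$ vectors jointly; unconditionality alone is not enough, since the Tsirelson estimate genuinely requires successive supports. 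The paper sidesteps this by keeping the two decompositions separate: $\|f(\mbar)-f(\nbar)\|\le 2\vep+\|\sum_i y_{\mbar}^{(i)}\|+\|\sum_i y_{\nbar}^{(i)}\|$, applying \eqref{E:2.13} to each group of $k$ blocks (giving $2\lip(f)$ each), and choosing $\vep=\tfrac12\lip(f)$; that triangle-inequality split is precisely what yields the constant $5=1+2+2$. (Also a small bookkeeping slip: in $H_k^\omega$ adjacent vertices are at Hamming distance $1$, not $2$; the norm bound $\|y_{\mbar}^{(i)}\|\le\lip(f)$ comes directly from weak lower semicontinuity, not from a factor-of-$2$ distance.)
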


The concentration inequality above is strong enough to rule out coarse and uniform embeddings, and can already be invoked to provide a negative solution to the Main Problem without the full strength of Theorem A. 

\medskip

The other ingredient, presented in Section \ref{S:3}, uses the structure of spreading models of a Banach space to prove positive embeddability results. Such results are obtained for three different metrics: the tree metric, the Hamming metric and the symmetric difference metric. Our embeddings are based on the existence of spreading models that are isomorphic, or not isomorphic, to $\co$. 

\medskip

The proof of Theorem A, given in Section \ref{S:4}, is obtained by analyzing the tension between the concentration inequality for the Hamming graphs and the embeddability of the Hamming graphs into spaces which have spreading models that are not isomorphic to $\co$. For readers interested mostly in the geometry of Hilbert space we included in Remark \ref{R:4.6} a short proof (which avoids the machinery of spreading models and only requires Theorem D) of the $\ell_2$-case of Corollary $B$.

\medskip

The analysis of the spreading model structure of infinite dimensional Banach spaces in Section \ref{S:3}, also lead us to a metric characterization of finite dimensionality in terms of equi-coarse embeddability of the sequence of countably branching trees of finite but arbitrarily large height, denoted by $(T_k^\omega)_{k\ge 1}$.

\begin{thmE}
For a Banach space  $Y$ the following assertions are equivalent:
\begin{enumerate}
\item $Y$ is finite-dimensional.
\item $(T_k^\omega)_{k\ge1}$ does not equi-coarsely embed into $Y$.
\end{enumerate}
\end{thmE}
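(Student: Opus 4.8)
The plan is to prove $(1)\Rightarrow(2)$ by an elementary packing argument, and to prove $(2)\Rightarrow(1)$ --- by contraposition --- by building the required embeddings from a spreading model of $Y$, invoking the embeddability results of Section~\ref{S:3}. For $(1)\Rightarrow(2)$ I would argue by contraposition. Suppose that $(f_k)_{k\ge1}$ realizes an equi-coarse embedding of $(T_k^\omega)_{k\ge1}$ into $Y$, with nondecreasing moduli $\rho_1\le\rho_2$, $\rho_2$ real-valued and $\rho_1(t)\to\infty$. Fix $t_0\in\bn$ with $\delta:=\rho_1(t_0)>0$ and work inside $T_{t_0}^\omega$. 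For $m\in\bn$ let $u_m$ be the vertex $(m,1,1,\dots,1)$ of length $t_0$; distinct $u_m$ have only the root as common prefix, hence are pairwise at tree distance $2t_0$, so the points $f_{t_0}(u_m)$, $m\in\bn$, form an infinite subset of $Y$ that is $\delta$-separated (since $\rho_1(2t_0)\ge\rho_1(t_0)=\delta$) and of diameter at most $\rho_2(2t_0)<\infty$. A finite-dimensional normed space admits no such set, its bounded subsets being totally bounded; hence $Y$ is infinite-dimensional. The uniformity of the moduli over $k$ is essential here, since a single bounded graph $T_k^\omega$ coarsely embeds into any Banach space by a constant map.

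For $(2)\Rightarrow(1)$ I would again argue by contraposition: assuming $Y$ infinite-dimensional, I would construct an equi-coarse embedding of $(T_k^\omega)_{k\ge1}$ into $Y$. If $\ell_1$ embeds isomorphically into $Y$, this is immediate: every tree metric embeds isometrically into $\ell_1$ --- map a vertex to the sum of the edge unit vectors along its branch from the root, so that the norm of a difference counts the edges in the symmetric difference of the two branches, which equals the tree distance --- and composing with a fixed isomorphic embedding $\ell_1\khookrightarrow Y$ yields embeddings of the $T_k^\omega$ with bi-Lipschitz distortion bounded independently of $k$. If $\ell_1$ does not embed into $Y$, then $Y$ contains a normalized weakly null sequence: by Riesz's lemma pick a normalized sequence $\tfrac12$-separated in norm, pass by Rosenthal's $\ell_1$-theorem to a weakly Cauchy subsequence, and take normalized differences of consecutive terms. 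Passing to a further subsequence, by the Brunel--Sucheston theorem this weakly null sequence may be assumed to generate a spreading model $E$, and I would split into two cases according to whether $E$ is isomorphic to $\co$.

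If $E$ is isomorphic to $\co$, the embeddability result of Section~\ref{S:3} for spaces with a $\co$-spreading model provides equi-bi-Lipschitz, hence equi-coarse, embeddings of $(T_k^\omega)_{k\ge1}$ into $Y$. If $E$ is not isomorphic to $\co$, the embeddability result of Section~\ref{S:3} for spaces with a spreading model not isomorphic to $\co$ provides an equi-coarse embedding of the Hamming graphs $(\sizeeqk{\bn},d_H)_{k\ge1}$ into $Y$; and since $T_k^\omega$ embeds into $(\sizeeqk{\bn},d_H)$ with bi-Lipschitz distortion at most $2$ --- for instance via $(n_1,\dots,n_j)\mapsto\{(t,(n_1,\dots,n_t)):1\le t\le j\}\cup\{(t,\star):j<t\le k\}$, for which a direct slot-by-slot count gives $d_H\le d_T\le 2\,d_H$ --- composition yields an equi-coarse embedding of $(T_k^\omega)_{k\ge1}$ into $Y$. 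In every case $(T_k^\omega)_{k\ge1}$ equi-coarsely embeds into $Y$, which is the negation of $(2)$.

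The genuinely hard input is the pair of embedding theorems borrowed from Section~\ref{S:3}, and the harder of the two --- equi-coarse embeddability of the Hamming graphs into a space with a spreading model not isomorphic to $\co$, the same ingredient that drives Theorem~A --- is where I expect the main difficulty. There one must, for each $k$ separately, construct a map $\sizeeqk{\bn}\to Y$ with moduli independent of $k$ by substituting for each ``coordinate'' a block of the weakly null sequence chosen far enough out that the spreading-model estimates hold with a $k$-dependent tolerance, and then convert the failure of the $\co$-estimate for the subsymmetric basis $(\tilde e_i)$ of $E$ (namely $\|\sum_{i=1}^n\tilde e_i\|\to\infty$) into a uniform lower modulus of order $\|\sum_{i=1}^{2t}\tilde e_i\|$; the approximation errors inherent to spreading models can be absorbed $k$ by $k$ precisely because each graph in the sequence is embedded by its own map.
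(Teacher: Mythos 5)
Your overall strategy matches the paper's --- the packing argument for $(1)\Rightarrow(2)$, and for $(2)\Rightarrow(1)$ the same split via Rosenthal's $\ell_1$-theorem (handling the $\ell_1$ case by the isometric embedding of $T_\infty^\omega$ into $\ell_1$, then extracting a normalized weakly null sequence) followed by the dichotomy on whether the resulting spreading model is isomorphic to $\co$. The difference, and the gap, is in the final sub-case. When the spreading model is not isomorphic to $\co$, the paper invokes the $d_T$-part of Corollary~\ref{C:3.2} directly: $(\fin{\bn},d_T)$ coarsely embeds into $Y$, and each $T_k^\omega=\sizelek{\bn}$ is a metric subspace of it, so the sequence $(T_k^\omega)_{k\ge1}$ equi-coarsely embeds with moduli inherited from a single map. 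You instead route through the Hamming graphs, composing the $d_H$-part of Corollary~\ref{C:3.2} with a claimed distortion-$2$ embedding $T_k^\omega\hookrightarrow(\sizeeqk{\bn},d_H)$, and that second ingredient fails.

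The trouble is that $d_H$ on $\sizeeqk{\bn}$ in this paper is computed after listing the $k$-element set in increasing order, not according to the slot label $t$ your map carries. Any identification of $\{1,\dots,k\}\times(\fin{\bn}\cup\{\star\})$ with $\bn$ necessarily interleaves the slots in the ambient order (each slot has infinitely many possible values), so a single change in slot $t$ can shift the sorted ranks of many elements: two adjacent vertices of $T_k^\omega$ can have images at $d_H$-distance as large as $k$, and your maps $T_k^\omega\to(\sizeeqk{\bn},d_H)$ are not equi-Lipschitz. What your ``slot-by-slot count'' actually computes is half the symmetric difference of the image sets, i.e.\ the Johnson distance $d_J=\tfrac12 d_\symdif$, and for $d_J$ your inequality $d_J\le d_T\le 2d_J$ is correct. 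The fix is therefore either to appeal to the $d_\symdif$-part of Corollary~\ref{C:3.2} instead (restricting the single coarse embedding of $(\fin{\bn},d_\symdif)$ to $\sizeeqk{\bn}$ handles the Johnson graphs with uniform moduli, and then your distortion-$2$ map into $J_k^\omega$ closes the argument), or --- simplest, and what the paper does --- to drop the detour and use the $d_T$-part of the corollary directly. Everything else in your argument is correct and parallels the paper.
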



We mention a few related open problems in the last section.

\section{Preliminaries}\label{S:2}

\subsection{Nonlinear embeddings}
Let $(X,d_X)$ and $(Y,d_Y)$ be two metric spaces and $f:X\to Y$. One defines
$$\rho_f(t)=\inf\big\{d_Y(f(x),f(y)) : d_X(x,y)\geq t\big\},$$
and
$$\omega_f(t)={\rm sup}\{d_Y(f(x),f(y)) : d_X(x,y)\leq t\}.$$

\noindent Note that for every $x,y\in X$,
\begin{equation}\label{expcomp}
\rho_f(d_X(x,y))\le d_Y(f(x),f(y))\le\omega_f(d_X(x,y)).
\end{equation}

The moduli $\rho_f$ and $\omega_f$ will  be called the \textit{compression modulus} and the \textit{expansion modulus} of the embedding, respectively. We adopt the convention $\sup(\emptyset)=0$ and $\inf(\emptyset)=+\infty$. The map $f$ is a \textit{coarse embedding} if $\lim_{t\to \infty}\rho_f(t)=\infty$ and $\omega_f(t)<\infty$ for all $t>0$. A map $f\colon X\to Y$ is said to be a \textit{uniform embedding} if $\lim_{t\to 0}\omega_f(t)=0$ and $\rho_f(t)>0$ for all $t>0$, i.e. $f$ is an injective uniformly continuous map whose inverse is uniformly continuous.

If one is given a family  of metric spaces $(X_i)_{i\in I}$, one says that {\em $(X_i)_{i\in I}$ equi-coarsely (resp. equi-uniformly)} embeds into $Y$ if there exist non-decreasing functions $\rho, \omega\colon [0,\infty)\to[0,\infty)$ and for all $i\in I $, maps $f_i\colon X_i\to Y$ such that $\rho\le \rho_{f_i}$, $\omega_{f_i}\le \omega$, and $\lim_{t\to \infty}\rho(t)=\infty$ and $\omega(t)<\infty$ for all $t>0$ (resp. $\lim_{t\to 0}\omega(t)=0$ and $\rho(t)>0$ for all $t>0$).

\subsection{Tree, symmetric difference and Hamming metrics}\label{S:1.2}

In this section we define the metric spaces which will be needed and studied in the sequel. For any infinite subset $\bm$ of $\bn$, let $\infin{\bm}:=\{ A\subset \bm: A\text{ infinite}\}$ and $\fin{\bm}:=\{ A\subset \bm: A\text{ finite}\}$. For $k\in \bn$, we put $\sizelek{\bm}:=\{ A\subset \bm: |A|\le k\}$ and $\sizeeqk{\bm}:=\{ A\subset \bm: |A|=k\}$ where $|A|$ denotes the cardinality of the set $A$. Elements
 of  $\infin{\bm}$ and $\fin{\bm}$
 will always be listed in an increasing order, \ie if we write $\mbar=\{m_1,m_2,\ldots \}\in\infin{\bm}$, or $\mbar=\{m_1,m_2,\ldots, m_l \}\in\fin{\bm}$  we assume that $m_1<m_2<\ldots$ or $m_1<m_2<\ldots< m_l$, respectively. We define three metrics, $d_T$, $d_\symdif$  and $d_H$ on $\fin{\bn}$. The restrictions of these metrics to the sets $\sizelek{\bm}$, $\sizeeqk{\bm}$ for some $\bm \in \infin{\bn}$ and some $k\in\bn$, will still be denoted $d_T$, $d_\symdif$  and $d_H$.

\noindent{\bf The tree metric.} For $\mbar,\nbar\in\fin{\bn}$, $\mbar\not=\nbar$, let

\begin{equation}
d_T(\mbar,\nbar):=|\mbar|+|\nbar|-2|\mbar\land\nbar|,
\end{equation} where $\mbar\land\nbar:=\{m_1,m_2,\dots,m_l\}$ if $m_1=n_1, m_2=n_2,\dots,m_l=n_l$ and $m_{l+1}\neq n_{l+1}$.

We denote by $\preceq_T$ the partial order of {\em extension} on $[\bn]^{<\omega}$, {\it i.e.}
 $\mbar\preceq_T \nbar$ if $|\mbar|\le |\nbar|$ and for all $i\le |\mbar|$ one has $m_i=n_i$. One can then define a graph structure on $\fin{\bn}$, which is our set of vertices, by declaring two vertices $\mbar,\nbar$ to be adjacent if $\mbar$ is the immediate predecessor of $\nbar$ or $\nbar$ is the immediate predecessor of $\mbar$. The graph obtained is the countably branching tree of infinite height, denoted $T_\infty^\omega(\bn)$ or simply $T_\infty^\omega$ . The countably branching tree of height $k$, $[\bn]^{\le k}$, will also be denoted $T_k^\omega$. It is easy to see that the tree metric coincides with the graph metric.

\noindent{\bf The symmetric difference metric.} For $\mbar,\nbar\in\fin{\bn}$, $\mbar\not=\nbar$, let
\begin{equation}
d_\symdif(\mbar,\nbar):=|\mbar\symdif\nbar|,
\end{equation}
where $\mbar\symdif\nbar:=(\mbar\setminus\nbar)\cup(\nbar\setminus\mbar)$ is the symmetric difference.
At some point, it will be useful for us to notice that one can define a graph structure on $\sizeeqk{\bn}$ by defining an adjacency relation as follows: two vertices $\mbar,\nbar\in \sizeeqk{\bn}$ are adjacent if and only if $|\mbar\setminus\nbar|=|\nbar\setminus\mbar|=1$. This graph is the $k$-dimensional Johnson graph over a countable set, and will be denoted $J_k^\omega(\bn)$ or simply $J_k^\omega$, and its graph metric $d_J$ coincides with the metric $\frac{d_\symdif}{2}$.

\noindent{\bf The Hamming metric.} For $\mbar,\nbar\in\fin{\bn}$, let
\begin{align}\label{E:2.9}
d_H(\mbar,\nbar):=\big|\{ i\in& \{1,2,\ldots,\min(|\mbar|,|\nbar|)\}: m_i\not=n_i\}\big|\\
    &\qquad\qquad+ \max(|\mbar|,|\nbar|)-\min(|\mbar|,|\nbar|).\notag
\end{align}

When restricted to the set $\sizeeqk{\bn}$, the metric $d_H$, can be seen as the graph metric on the Hamming graph over a countable alphabet, denoted $H_k^\omega(\bn)$ or simply $H_k^\omega$, where two vertices are adjacent if they differ in exactly one coordinate.

\begin{rem}
Both graphs $H_k^\omega$ and $J_k^\omega$ are infinite versions of finite graphs that arose from the fundamental work from the mid-20th century in Coding Theory of R. W. Hamming and S. M. Johnson, respectively.
\end{rem}

 \subsection{Spreading models}\label{S:2.3}
For the convenience of the reader we briefly recall a few useful results  from the theory of spreading models initiated by Brunel and Sucheston \cite{BrunelSucheston1974} in the 70's. We shall follow the exposition in \cite{BeauzamyLapreste1984}. An application of Ramsey's Theorem yields that every bounded sequence $\xn$ in a separable Banach space admits a subsequence $\yn$ such that for all $k\ge 1$, $(a_i)_{i=1}^k\subset \br$,
$$N(a_1,a_2,\dots,a_k):=\lim_{n_1<n_2<\dots<n_k\to\infty}\|a_1y_{n_1}+a_2y_{n_2}+\dots+a_ky_{n_k}\|$$
exists, where the limit means that for all $\vep>0$, there is $r\in\bn$ such that for all $r\le n_1<n_2<\cdots<n_k$,

\begin{equation}
\Big|\|a_1y_{n_1}+a_2y_{n_2}+\dots+a_ky_{n_k}\|-N(a_1,a_2,\cdots,a_k)\Big|<\vep.
\end{equation}

It is easy to see that that if $\ei$ denotes the canonical basis of $\coo$, the vector space of sequences in $\br$ which eventually vanish,  the formula $\|a_1e_{1}+a_2e_{2}+\dots+a_ke_{k}\|_E:=N(a_1,a_2,\cdots,a_k)$ defines a semi-norm, and that  $\ei$ is a spreading sequence in the sense that for all $k\ge 1$, $(a_i)_{i=1}^k\subset \br$,
and integers $n_1<n_2<\cdots<n_k$,
\begin{equation}
\|a_1e_{n_1}+a_2e_{n_2}+\dots+a_ke_{n_k}\|_E=\|a_1e_{1}+a_2e_{2}+\dots+a_ke_{k}\|_E.
\end{equation}

If $\xn$ does not have a converging subsequence it is simple to show that the semi-norm is actually a norm. The completion of $\coo$ for the norm $\|\cdot\|_E$ is a Banach space $E$ called a \textit{ spreading model of $X$ generated by the sequence $\xn$}, and we will refer to the sequence $\ei$ as the \textit{fundamental sequence }of the spreading model. The fundamental sequence $\ei$ is not necessarily a basis of $E$ but if the generating sequence $\xn$ is normalized and weakly null then $\ei$ is a normalized $1$-suppression unconditional basis (\cite[Proposition 1, p. 24]{BeauzamyLapreste1984}).
We call a basic sequence $(z_j)_{j=1}^\infty$ in a Banach space $Z$ {\em $c$-suppression unconditional}, for some $c\ge 1$, if
 for all  $(a_i)_{i=1}^\infty \in c_{00}$  and all $A\subset \bn$
\begin{equation}\label{E:2.7}
\Big\|\sum_{i\in A}a_iz_{i}\Big\| \le  c\Big\|\sum_{i=1}^\infty a_iz_{i}\Big\|,
\end{equation}
and  in this case we denote  by $c_s$ the smallest  number $c$ satisfying \eqref{E:2.7}.
The basic sequence $(z_j)_{j=1}^\infty$ is called {\em $c$-unconditional} if for all $(a_i)_{i=1}^\infty \in c_{00}$ and all $(\vep_i)_{i=1}^\infty \subset \{-1,1\}$ we have
\begin{equation}\label{E:2.8}
\Big\|\sum_{i=1}^\infty\vep_i a_iz_{i}\Big\| \le  c\Big\|\sum_{i=1}^\infty a_iz_{i}\Big\|,
\end{equation}
and we denote in this case the smallest number $c$ satisfying \eqref{E:2.8} by $c_u$. It is easy to see
 that basic sequences are suppression unconditional  if and only if they are unconditional, and that in this case
\begin{equation}
c_s\le c_u\le 2c_s.
\end{equation}
The function $\varphi_E(k):=\|\sum_{i=1}^k e_i\|_E$ where $\ei$ is the fundamental sequence of  a spreading model $E$ of a Banach space is usually called the {\em  fundamental function of $E$}.
We deduce the following.

 \begin{prop}\label{P:2}
Let $\xn$ be a normalized weakly null sequence in a Banach space $X$. Then for every $\vep>0$ there is a normalized weakly null basic subsequence $\yn$ of $\xn$ with basis constant $(1+\vep)$, generating a spreading model $E$ with fundamental function $\varphi_E$, such that for all $k\ge1$, for all $k\le n_1< n_2< \dots < n_{k}$, and for all $(\vep_i)_{i=1}^{k}\subset\{-1,1\}$ one has
\begin{align}
\frac{1}{2(1+\vep)}\varphi_E(k)&\le  \frac1{1+\vep}  \Big\|\sum_{i=1}^{k}\vep_ie_i\Big\|\\
 &\le \Big\|\sum_{i=1}^{k}\vep_iy_{n_i}\Big\|\le
 (1+\vep)\Big\|\sum_{i=1}^{k}\vep_ie_i\Big\|\le2 (1+\vep)\varphi_E(k).\notag
\end{align}
\end{prop}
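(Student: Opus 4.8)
The plan is to construct $\yn$ in two stages and then to read off the two outer inequalities of the chain from standard unconditionality. The starting observation is that the fundamental sequence $\ei$ of \emph{any} spreading model of a normalized weakly null sequence is a normalized $1$-suppression unconditional basis of $E$ (by \cite[Proposition~1, p.~24]{BeauzamyLapreste1984}), hence $2$-unconditional since $c_u\le 2c_s=2$. Thus for every $k\ge1$ and every $(\vep_i)_{i=1}^k\subset\{-1,1\}$, inserting $a_i=\vep_i$ in the unconditionality inequality gives $\tfrac12\varphi_E(k)\le\big\|\sum_{i=1}^k\vep_ie_i\big\|_E\le2\varphi_E(k)$, and suppression unconditionality applied to the singleton $\{1\}$ gives $\big\|\sum_{i=1}^k\vep_ie_i\big\|_E\ge\|e_1\|_E=1$. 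Dividing by $1+\vep$ on the left and multiplying by $1+\vep$ on the right shows that the first and last inequalities of the statement hold automatically, so the whole problem reduces to producing a normalized weakly null basic subsequence $\yn$, with basis constant at most $1+\vep$, for which $\big\|\sum_{i=1}^k\vep_iy_{n_i}\big\|$ agrees with $\big\|\sum_{i=1}^k\vep_ie_i\big\|_E$ up to the multiplicative factor $1+\vep$ whenever $k\le n_1<n_2<\dots<n_k$.

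First I would apply the Bessaga--Pe\l czy\'nski selection principle to extract a subsequence $(u_n)$ of $\xn$ that is basic with basis constant at most $1+\vep$; it stays normalized and weakly null, and any subsequence of it keeps a basis constant $\le1+\vep$ (the coordinate projections for a subsequence are restrictions of those for $(u_n)$). Next I would run the Ramsey/Brunel--Sucheston argument recalled above on a further subsequence $(v_n)$ of $(u_n)$ so that all the limits $N(a_1,\dots,a_k)$ exist; since $(v_n)$ is normalized and weakly null it has no norm-convergent subsequence, so $\|\cdot\|_E$ is an actual norm and $(v_n)$ generates a spreading model $E$ with fundamental sequence $\ei$ and fundamental function $\varphi_E$.

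The remaining and only delicate step is to thin $(v_n)$ so that the spreading-model limits are ``stabilized'' for every length $k$ simultaneously, with the length-$k$ estimate in force as soon as the least index is $\ge k$. For fixed $k$, the sign set $\{-1,1\}^k$ is finite and for each pattern $\big\|\sum_{i=1}^k\vep_iv_{n_i}\big\|\to\big\|\sum_{i=1}^k\vep_ie_i\big\|_E$ as $n_1<\dots<n_k\to\infty$, so there is $r_k\in\bn$, which we may take with $k\le r_1<r_2<\cdots$, such that $\big|\,\big\|\sum_{i=1}^k\vep_iv_{n_i}\big\|-\big\|\sum_{i=1}^k\vep_ie_i\big\|_E\,\big|<\tfrac{\vep}{1+\vep}$ for all $r_k\le n_1<\dots<n_k$ and all patterns. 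Because $\big\|\sum_{i=1}^k\vep_ie_i\big\|_E\ge1$, this upgrades to $\tfrac1{1+\vep}\big\|\sum_{i=1}^k\vep_ie_i\big\|_E\le\big\|\sum_{i=1}^k\vep_iv_{n_i}\big\|\le(1+\vep)\big\|\sum_{i=1}^k\vep_ie_i\big\|_E$ on the same range. Now put $y_n:=v_{r_n}$: this is a subsequence of $\xn$, normalized, weakly null, with basis constant $\le1+\vep$, and --- being a subsequence of $(v_n)$ --- it generates the same spreading model $E$, with the same $\ei$ and $\varphi_E$. If $k\le n_1<\dots<n_k$ then $y_{n_i}=v_{r_{n_i}}$ with $r_k\le r_{n_1}<\dots<r_{n_k}$ (the $r_m$ being increasing and $n_1\ge k$), so the two-sided estimate applies to $\sum_{i=1}^k\vep_iy_{n_i}$; stringing it together with the unconditionality bounds from the first paragraph yields the full chain.

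The only genuine obstacle is this last diagonalization: one must absorb the per-length stabilization threshold into the hypothesis $n_1\ge k$, which is exactly why one reindexes through $(v_{r_n})$ with $r_k\ge k$ and $r_k>r_{k-1}$. Everything else is bookkeeping together with the quoted fact that a fundamental sequence of a normalized weakly null generating sequence is $1$-suppression unconditional.
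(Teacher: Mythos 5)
Your proof is correct and is precisely the deduction the paper leaves to the reader after the phrase ``We deduce the following'': extract a basic subsequence with basis constant close to $1$ (Bessaga--Pe\l czy\'nski), build the spreading model via Brunel--Sucheston, stabilize the finitely many sign patterns of each length $k$ by a diagonal thinning $y_n=v_{r_n}$ with $r_1<r_2<\cdots$ so that the range $n_1\ge k$ forces $r_{n_1}\ge r_k$, and invoke the $1$-suppression unconditionality of the fundamental sequence (hence $c_u\le 2$) for the outer two inequalities. The step where you use $\|\sum_{i=1}^k\vep_i e_i\|_E\ge\|e_1\|_E=1$ to convert the additive tolerance $\vep/(1+\vep)$ into the multiplicative factor $1+\vep$ is the only small piece of bookkeeping and you carried it out correctly.
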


We will be particularly interested in $\co$-spreading models, where $\co$ is the space of real valued sequences converging to $0$ equipped with the sup-norm. The following proposition follows from  \cite[Lemma 1, p. 73]{BeauzamyLapreste1984} and  \cite[Lemma 4, p. 75]{BeauzamyLapreste1984}.

\begin{prop}\label{P:3}
$X$ has a spreading model isomorphic to $\co$ if and only if for all $\vep>0$, $X$ has a spreading model $E$ whose fundamental sequence $\ei$ is $(1+\vep)$-equivalent to the canonical basis of $\co$, i.e. that for all $k\ge 1$, $(a_i)_{i=1}^k\subset \br$,
\begin{equation}
\frac{1}{(1+\vep)}\sup_{1\le i\le k} |a_i|\le \Big\|\sum_{i=1}^k a_ie_{i}\Big\|_E\le (1+\vep)\sup_{1\le i\le k} |a_i|.
\end{equation}
\end{prop}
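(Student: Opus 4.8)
The plan is the following. The ``if'' direction is immediate: applying the hypothesis with $\vep=1$ produces a spreading model $2$-equivalent to the unit vector basis of $\co$, hence isomorphic to $\co$. For the ``only if'' direction I would start from \emph{some} spreading model of $X$ isomorphic to $\co$ and first replace it by a convenient one. A spreading model isomorphic to $\co$ contains no copy of $\ell_1$, so by Rosenthal's $\ell_1$-theorem its generating sequence has a weakly Cauchy subsequence; translating by the weak limit when it exists, or passing to the successive differences $x_{2i}-x_{2i-1}$ when it does not, and renormalizing, one arrives (this is essentially the content of the relevant lemmas of \cite{BeauzamyLapreste1984}) at a spreading model $E$ of $X$, still isomorphic to $\co$, whose fundamental sequence $\ei$ is normalized, spreading, weakly null and $1$-suppression unconditional. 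In particular $\big\|\sum_{i=1}^N d_ie_i\big\|_E\ge\|(d_i)\|_\infty$ for all scalars, and since $E\cong\co$ the quantity $C_0:=\sup\big\{\|\sum_{i=1}^N d_ie_i\|_E/\|(d_i)\|_\infty:\ N\in\bn,\ (d_i)\neq 0\big\}$ is finite, with $C_0\ge 1$.

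\emph{The correct block pattern.} Fix $\vep>0$ and set $C_0(\ell):=\sup\big\{\|\sum_{i=1}^\ell d_ie_i\|_E:\ \|(d_i)\|_\infty=1\big\}$. These increase to $C_0$, so I would fix $\ell$ with $C_0(\ell)\ge C_0/(1+\vep)$, take a maximizer, and rescale it to obtain scalars $c_1,\dots,c_\ell$ with $\|\sum_{j=1}^\ell c_je_j\|_E=1$ and $\|(c_j)\|_\infty=1/C_0(\ell)$. Inside $X$ I would then build, by a diagonal procedure, a sequence $(w_n)$ whose $n$-th term is a block $w_n=\sum_{t=1}^\ell c_t\,x_{p_{n,t}}$ of the generating sequence $(x_m)$ of $E$, with $p_{n,1}<\dots<p_{n,\ell}<p_{n+1,1}$ and the indices pushed far enough out that, using that finite combinations of $\ei$ are approximated (uniformly over bounded coefficients) by the corresponding combinations of $(x_m)$ and that $\ei$ is \emph{spreading} (so the gaps between blocks are immaterial), for every $k$, all blocks $m_1<\dots<m_k$ and all scalars $(a_n)_{n=1}^k$ in a prescribed exhausting family one has
\[
\Big\|\sum_{n=1}^k a_n w_{m_n}\Big\|_X\ \approx\ \Big\|\sum_{s=1}^{k\ell} b_s e_s\Big\|_E,\qquad b:=(a_1c_1,\dots,a_1c_\ell,a_2c_1,\dots,a_kc_\ell).
\]
Passing to a subsequence so that all spreading‑model limits exist, $(w_n)$ generates a spreading model $F$ of $X$, whose fundamental sequence $(\tilde e_n)$ satisfies $\|\sum_{n=1}^k a_n\tilde e_n\|_F=\|\sum_{s=1}^{k\ell}b_s e_s\|_E$.

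\emph{Estimating $F$.} Since $\|b\|_\infty=\|(a_n)\|_\infty\cdot\|(c_j)\|_\infty=\|(a_n)\|_\infty/C_0(\ell)$, the definition of $C_0$ gives the upper estimate
\[
\Big\|\sum_{n=1}^k a_n\tilde e_n\Big\|_F\ \le\ C_0\|b\|_\infty\ =\ \frac{C_0}{C_0(\ell)}\,\|(a_n)\|_\infty\ \le\ (1+\vep)\,\|(a_n)\|_\infty .
\]
For the lower estimate I would restrict $b$ to the indices arising from one fixed $n$ and use $1$-suppression unconditionality of $\ei$:
\[
\Big\|\sum_{s=1}^{k\ell} b_s e_s\Big\|_E\ \ge\ \Big\|\sum_{j=1}^\ell a_n c_j e_j\Big\|_E\ =\ |a_n|\,\Big\|\sum_{j=1}^\ell c_j e_j\Big\|_E\ =\ |a_n|,
\]
so $\|\sum_n a_n\tilde e_n\|_F\ge\|(a_n)\|_\infty\ge(1+\vep)^{-1}\|(a_n)\|_\infty$. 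Hence $(\tilde e_n)$ is $(1+\vep)$-equivalent to the canonical basis of $\co$, and, being a spreading model of $X$, $F$ is exactly what the statement demands.

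\emph{Where the difficulty lies.} The routine‑looking but genuinely delicate step is the diagonalization producing $(w_n)$: one is realizing what is really a spreading model of a block sequence of $\ei$ as a bona fide spreading model of $X$ — the phenomenon that a spreading model of a spreading model of $X$ is a spreading model of $X$ (cf. \cite[Lemma 4, p.~75]{BeauzamyLapreste1984}) — and making the displayed approximation uniform requires matching larger and larger finite configurations of $(w_n)$ with those of $\ei$, and then with those of $(x_m)$. The reductions in the first paragraph are standard, but they are precisely what secures the $1$-suppression unconditionality of $\ei$ on which the lower estimate rests, so I would carry them out carefully at the outset; the key new idea, by contrast, is simply that the right block pattern to use is not the all‑ones pattern but a pattern of small sup‑norm $1/C_0(\ell)$ that (essentially) witnesses the $\co$-upper constant of $E$, which automatically controls the signs.
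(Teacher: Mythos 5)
Your proposal is correct, and it essentially reconstructs the argument behind the two results of Beauzamy--Lapreste (\cite[Lemma 1, p.~73]{BeauzamyLapreste1984} and \cite[Lemma 4, p.~75]{BeauzamyLapreste1984}) that the paper cites in lieu of a proof: reduce to a spreading model generated by a weakly null sequence so that the fundamental sequence is spreading and $1$-suppression unconditional, then block along a finite pattern nearly attaining the $\co$-upper constant and use that spreading models built from blocks of a spreading model are again spreading models of $X$. So the approach matches the paper's (cited) proof, and no gap needs to be flagged.
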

We will also need the following observation from \cite{BeauzamyLapreste1984}.

\begin{prop}\label{P:4}\cite[Proposition 3, p. 79]{BeauzamyLapreste1984} Let $\xn$ be a normalized weakly null sequence in a Banach space $X$ that generates a spreading model $E$. Then $E$ is not isomorphic to $\co$ if and only if there exists a subsequence $\yn$ of $\xn$ such that,
\begin{equation}
\lim_{k\to\infty}\ \ \inf_{n_1<\cdots<n_k}\ \ \inf_{(\vep_i)_{i=1}^k\in \{-1,1\}^k}\Big\|\sum_{i=1}^k\vep_i y_{n_i}\Big\|=+\infty.
\end{equation}
\end{prop}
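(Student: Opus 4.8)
The plan is to prove the two implications separately; the substantive one is that $E\not\cong\co$ forces a subsequence along which sign‑sums blow up. Write $(e_i)$ for the fundamental sequence of $E$. Since $\xn$ is normalized and weakly null, $(e_i)$ is a normalized $1$‑suppression unconditional basis of $E$, so $c_u\le 2c_s=2$ and hence $\varphi_E(m)=\|\sum_{i=1}^m e_i\|_E\le 2\|\sum_{i=1}^m\vep_ie_i\|_E$ for every choice of signs; consequently $N_m:=\min_{\vep\in\{-1,1\}^m}\|\sum_{i=1}^m\vep_ie_i\|_E$ satisfies $\tfrac12\varphi_E(m)\le N_m\le\varphi_E(m)$, and $(N_m)$ is non‑decreasing by suppression unconditionality. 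I also invoke the standard fact (which underlies Proposition~\ref{P:3}; see \cite[Lemma 1, p.~73 and Lemma 4, p.~75]{BeauzamyLapreste1984}) that $E$ is isomorphic to $\co$ if and only if $\varphi_E$ is bounded. For the half used here: if $\sup_m\varphi_E(m)=L<\infty$, then writing $a\in\coo$ with $\|a\|_\infty=1$ as $a=\int_0^1\big(\sum_{i:\,|a_i|>t}\operatorname{sign}(a_i)e_i\big)\,dt$ and using that $(e_i)$ is spreading and $c_u$‑unconditional gives $\|\sum a_ie_i\|_E\le c_uL\le 2L$, while $\|\sum a_ie_i\|_E\ge\|a\|_\infty$, so $(e_i)$ is $2L$‑equivalent to the $\co$‑basis.

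For the implication ``subsequence $\Rightarrow$ $E\not\cong\co$'', let $\yn$ be a subsequence of $\xn$ realising the blow‑up. Every subsequence of $\xn$ generates the same spreading model $E$, so for each fixed $\vep\in\{-1,1\}^k$ the iterated limit of $\|\sum_{i=1}^k\vep_iy_{n_i}\|$ over $n_1<\dots<n_k\to\infty$ equals $\|\sum_{i=1}^k\vep_ie_i\|_E$. Since an infimum does not exceed a $\liminf$ and the minimum over the finite sign set commutes with the infimum over $k$‑tuples, $\inf_{n_1<\dots<n_k}\min_{\vep}\|\sum_{i=1}^k\vep_iy_{n_i}\|\le N_k\le\varphi_E(k)$; letting $k\to\infty$ the hypothesis forces $\varphi_E$ to be unbounded, hence $E\not\cong\co$.

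For the converse, assume $E\not\cong\co$, so $\varphi_E(m)\to\infty$ and thus $N_m\to\infty$. First pass to a basic subsequence of $\xn$ with basis constant $K\le 2$ (Bessaga--Pe\l czy\'nski, cf.\ Proposition~\ref{P:2}; it still generates $E$) and keep calling it $\xn$. For each $m$ fix a threshold $r_m$, strictly increasing in $m$, such that $\big|\,\|\sum_{i=1}^m\vep_ix_{l_i}\|-\|\sum_{i=1}^m\vep_ie_i\|_E\,\big|<1$ whenever $r_m\le l_1<\dots<l_m$ and $\vep\in\{-1,1\}^m$ (possible since there are only finitely many sign patterns). Put $y_j:=x_{r_{2j}}$. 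Now fix $k$, a $k$‑tuple $n_1<\dots<n_k$, and signs $\vep$, and let $t:=|\{i:\,r_{2n_i}<r_k\}|=|\{i:\,2n_i<k\}|$; since the $n_i$ are distinct positive integers, $t\le\lceil k/2\rceil-1$, so $m:=k-t\ge\lceil k/2\rceil$. The terms $y_{n_{t+1}},\dots,y_{n_k}$ carry $\xn$‑indices $\ge r_k\ge r_m$, so the threshold property at level $m$ gives $\|\sum_{i=t+1}^k\vep_iy_{n_i}\|>N_m-1\ge N_{\lceil k/2\rceil}-1$; applying to $\sum_{i=1}^k\vep_iy_{n_i}$ the partial‑sum projection of the basis $\yn$ onto $\operatorname{span}\{y_{n_{t+1}},y_{n_{t+1}+1},\dots\}$, whose norm is at most $1+K\le 3$, we get $\|\sum_{i=1}^k\vep_iy_{n_i}\|\ge\frac1{1+K}\|\sum_{i=t+1}^k\vep_iy_{n_i}\|\ge\frac13\big(\tfrac12\varphi_E(\lceil k/2\rceil)-1\big)$. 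This bound is uniform in the $k$‑tuple and in $\vep$, and $\varphi_E(\lceil k/2\rceil)\to\infty$, so $\lim_{k\to\infty}\inf_{n_1<\dots<n_k}\min_{\vep}\|\sum_{i=1}^k\vep_iy_{n_i}\|=+\infty$.

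The only real obstacle is the converse direction: the infimum runs over \emph{all} $k$‑tuples $n_1<\dots<n_k$, including those whose smallest entries lie outside the spreading‑model regime no matter how fast one lets $\yn$ grow, so a crude ``make the indices grow quickly'' subsequence does not suffice. This is circumvented by choosing $y_j=x_{r_{2j}}$, which guarantees that in any $k$‑tuple at most $\lceil k/2\rceil-1$ of the terms can be ``bad'', and then discarding those few bad terms at the cost of the bounded factor $1+K$ via the basis projection, instead of losing their (possibly dominant) norms through the triangle inequality.
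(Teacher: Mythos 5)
Your proof cannot be matched line-by-line against the paper's, because the paper offers no proof of Proposition \ref{P:4}: it is quoted verbatim from Beauzamy--Laprest\'e \cite[Proposition 3, p.~79]{BeauzamyLapreste1984}. What you supply is an actual argument, and its substantive half ($E\not\cong\co$ implies a subsequence with blowing-up sign sums) is correct: the reduction to unboundedness of $\varphi_E$ via $N_m\ge\tfrac12\varphi_E(m)$ and the monotonicity of $N_m$ is fine, and the key device --- taking $y_j=x_{r_{2j}}$ so that in any $k$-tuple at most $\lceil k/2\rceil-1$ entries lie below the level-$m$ spreading-model threshold, then deleting those entries with the tail basis projection of the Bessaga--Pe\l czy\'nski basic subsequence at the cost of a factor $1+K\le 3$ --- genuinely resolves the difficulty you identify, namely that the infimum runs over all $k$-tuples, so that a merely lacunary choice of indices would not do.

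One caveat on the easy direction: you announce that only the implication ``$\varphi_E$ bounded $\Rightarrow E\cong\co$'' is needed, and that is the half you prove; but your converse ends with ``$\varphi_E$ unbounded, hence $E\not\cong\co$'', which is the contrapositive of the \emph{other} half, ``$E\cong\co\Rightarrow\varphi_E$ bounded''. That half is not automatic: it is exactly where weak nullness (equivalently, the suppression unconditionality of the fundamental sequence) enters, and it fails for spreading sequences in general --- the spreading model generated by the summing basis of $\co$ spans a space isomorphic to $\co$ yet has $\varphi(k)=k$. It does follow from the same lemmas of \cite{BeauzamyLapreste1984} you invoke (or from Rosenthal's $\ell_1$-theorem together with a gliding-hump argument in $\co$ applied to the weakly null fundamental sequence), and since the paper itself leans on that reference for the entire proposition, this is a mislabeling of which half is cited and which is proved rather than a genuine gap; just note that both halves of the equivalence are used, one in each direction.
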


\subsection{Tsirelson spaces $T$ and $T^*$}\label{S:2.4}
We recall a construction of a space  originally designed by Tsirelson \cite{Tsirelson1974}, which is nowadays usually referred to as $T^*$, while $T$ denotes its dual space and was described by Figiel and Johnson in \cite{FigielJohnson1974}. Doing so, convenient notation and terminology that will be needed later on are introduced. Tsirelson's  original construction of $T^*$ has a geometric flavor while Figiel-Johnson construction of $T$ is more analytic.

For $E,F\in[\bn]^{<\omega}$, and $n\in\bn$  we write $E<F$ if $\max(E)<\min(F)$, and $n\le E$, or $n<E$, if $n\le \min E$, or $n< \min E$, respectively. Here we set $\max(\emptyset)=0$ and $\min(\emptyset)=\infty$. We call a sequence $(E_j)_{j=1}^n\subset [\bn]^{<\omega}$ {\em admissible } if $n\le E_1<E_2<\cdots< E_n$.
For $x=\sum_{j=1}^\infty \xi_j e_j\in c_{0}$ we call $\supp(x)=\{j\in\bn: \xi_j\not=0\}$ the {\em support of $x$} and for $E\in [\bn]^{<\omega}$ we write $E(x)=\sum_{j\in E} \xi_j e_j$. We call  a finite  sequence $(x_j)_{j=1}^n$ of elements in $c_{00}$  a {\em block sequence} if  $\supp(x_{j-1})<\supp(x_j)$, for all $2\le j\le n$. We define inductively  for each $k\in\bn_0$ a norm $\|\cdot\|_k$ on $c_{00}$.  For $x=\sum_{j=1}^\infty \xi_j e_j\in c_{00}$ we put
$\|x\|_0=\max_{j\in\bn} |\xi_j|$, and assuming that $\|\cdot\|_{k-1}$ has been defined for some $k\in\bn$, we let for $x\in c_{00}$

\begin{equation*}
\|x\|_k=\max\Big(\|x\|_{k-1}, \frac12 \max\Big\{\sum_{j=1}^n \|E_j(x)\|_{k-1}: (E_j)_{j=1}^n \text{ is admissible}\Big\} \Big).
\end{equation*}

\noindent Then we put for $x\in c_{00}$

\begin{equation*}
\|x\|=\lim_{k\to \infty} \|x\|_k=\max_{k\in\bn} \|x\|_k.
\end{equation*}

\noindent $\|\cdot \|$ is then a norm on $c_{00}$ and $T$   is defined to be the completion of $c_{00}$ with respect to $\|\cdot\|$. As observed in \cite{FigielJohnson1974},  $\|\cdot\|$ satisfies the following implicit equation
 for all $x\in T$
\begin{equation*}\label{E:3.1}
 \|x\|=\max\Big(\|x\|_0, \frac12 \sup\Big\{\sum_{j=1}^n \|E_j(x)\|: (E_j)_{j=1}^n \text{ is admissible}\Big\} \Big),
\end{equation*}

\noindent and the unit vector basis is a $1$-unconditional  basis of $T$. Then it was proven in \cite{FigielJohnson1974} that $T$ does not contain a subspace isomorphic to $\ell_1$, which, together with the easy  observation that $T$ certainly does not contain a subspace isomorphic to $c_0$, yields by James' Theorem  \cite[Theorem 2]{James1950} that $T$ must be reflexive. Let us now denote the dual of $T$ by $T^*$ (the {\em original Tsirelson's space}). We denote the unit basis of $T^*$ by $(e^*_j)_{j=1}^\infty $ (which is a $1$-unconditional basis of $T^*$) and its unit ball by $B_{T^*}$. Next to the property that $T^*$ is reflexive, the following property of $T^*$ will be essential for us:

\begin{align}\label{E:2.13}
\Big\|& \sum_{j=1}^n x^*_j\Big\|\le 2, \\
 &\text{whenever $(x^*_j)_{j=1}^n\subset B_{T^*}$ is a block sequence, with $n\!\le\!\supp(x^*_1)$.}\notag
\end{align}
Indeed, assume that $(x^*_j)_{j=1}^n$ is a block sequence in $B_{T^*}$, with $n\le\supp(x^*_1)$,  and let $x\in T$, $\|x\|=1$, be such that  $\big\|\sum_{j=1}^n x^*_j\big\|=\sum_{j=1}^n x^*_j(x)$. By the $1$-unconditionality of $\ej$  in $T$ we can assume that $\supp(x)\subset \bigcup_{j=1}^n E_j$, where $E_j=\supp(x^*_j)$, for $j=1,2,\ldots, n$. Since $(E_j)_{j=1}^n$ is admissible we obtain from \eqref{E:3.1} that

\begin{equation*}
\Big\|\sum_{j=1}^n x^*_j\Big\|=\sum_{j=1}^n x^*_j(x)=\sum_{j=1}^n x^*_j\big(E_j(x)\big)\le \sum_{j=1}^n \big\|E_j(x)\|\le 2\|x\|=2,
\end{equation*}
which proves our claim \eqref{E:2.13}.

Tsirelson's space $T^*$ is not the only reflexive space satisfying inequality \eqref{E:2.13}. For every countable ordinal $\alpha$, one can define $T^*_\alpha$ (see \cite{OSZ2007}), a higher order version of $T^*$, where the admissibility condition refers to Schreier families of order $\alpha$. These Tsirelson-type spaces are all reflexive and satisfy inequality \eqref{E:2.13}, and they are incomparable in the sense that $T_\alpha^*$ does not embed isomorphically into $T_\beta^*$ whenever $\alpha\neq \beta$.

\begin{rem}
In this article, all the results that are valid for $T^*$ also hold for every reflexive Banach space satisfying equation \eqref{E:2.13}, so in particular for $T^*_\alpha$ for any countable ordinal $\alpha$.
\end{rem}

\section{A metric characterization of finite dimensionality}\label{S:3}

We start by proving some coarse embeddability results for the metric spaces introduced in Section \ref{S:1.2}. Here, we do not need that the metrics can be seen as graph metrics. In Lemma \ref{L:3.1} below we show that if $Y$ is an infinite dimensional Banach space admitting a spreading model $E$, generated by a weakly null sequence, then there always exists a $1$-Lipschitz map into $Y$, defined on any of the spaces $(\fin{\bn},d_T)$, $(\fin{\bn},d_H)$, or $(\fin{\bn},d_\symdif)$, whose compression modulus is essentially bounded from below by the fundamental function of the spreading model.

\begin{lem}\label{L:3.1} Let $(X,d_X)$ be any of the following metric spaces: $(\fin{\bn},d_\symdif)$,  $(\fin{\bn},d_T)$, or $([\bn]^k,d_H)$, for $k\in\bn$. Let $Y$ be an infinite dimensional Banach space admitting a spreading model $E$, generated by a normalized weakly null sequence, then for every $\nu>0$ there exists a map  $f_\nu:(X,d_X)\to Y$ such that for all $x,y\in X$,
\begin{equation}\label{E:3.1.1}
\frac{1}{8(1+\nu)}\varphi_E\left(d_X(x,y)\right)\le \|f_\nu(x)-f_\nu(y)\|_Y\le d_X(x,y).
\end{equation}
\end{lem}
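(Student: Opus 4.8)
The plan is to use Proposition \ref{P:2} to pass to a well-behaved subsequence $\yn$ of the generating sequence, with basis constant close to $1$, and then to define $f_\nu$ by an explicit ``coordinate'' formula that turns a finite set $\xb=\{x_1<x_2<\cdots<x_l\}\in\fin\bn$ into a suitable finite linear combination of the $y_{x_i}$. The natural candidate is $f_\nu(\xb)=\sum_{i=1}^{l}\pm y_{x_i}$, or (to handle the tree and Hamming metrics, where the \emph{positions} of the coordinates matter and not just their values) a formula that records the $i$-th entry of $\xb$ using $y_{\pi(i,x_i)}$ for some fixed injection $\pi\colon\bn\times\bn\to\bn$, e.g. a bijection. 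For the symmetric-difference metric one wants the simpler version $f_\nu(\xb)=\sum_{n\in\xb} y_n$, since then $f_\nu(\xb)-f_\nu(\yb)=\sum_{n\in\xb\setminus\yb}y_n-\sum_{n\in\yb\setminus\xb}y_n$ is a $\pm1$ combination of exactly $d_\symdif(\xb,\yb)$ of the $y_n$'s over a set of indices of size $d_\symdif(\xb,\yb)$.

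The upper estimate $\|f_\nu(x)-f_\nu(y)\|_Y\le d_X(x,y)$ should be essentially immediate: in each of the three cases, $f_\nu(x)-f_\nu(y)$ is a signed sum of at most $d_X(x,y)$ of the vectors $y_n$, which are normalized, so the triangle inequality gives the bound (one may need to rescale $\yn$ to have norm exactly $1$, which Proposition \ref{P:2} already arranges). The lower estimate is where the fundamental function enters: if $d_X(x,y)=k$, then $f_\nu(x)-f_\nu(y)$ is a $\pm1$ combination of $y_{n_1},\dots,y_{n_k}$ for some indices $n_1<\cdots<n_k$, and by the spreading and $1$-suppression-unconditionality properties captured in Proposition \ref{P:2}, its norm is at least $\tfrac1{2(1+\nu)}\varphi_E(k)$ — up to the extra factor $2$ absorbed into the constant $8(1+\nu)$ in \eqref{E:3.1.1}. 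The one genuine subtlety is that Proposition \ref{P:2} only controls $\|\sum_{i=1}^k\vep_i y_{n_i}\|$ when $k\le n_1$; to get around the requirement that the smallest index be $\ge k$, I would compose with a shift, i.e. work with $y_{n+r}$ or reindex so that the first coordinate used is large, or simply note that after deleting finitely many terms we may assume the generating sequence is indexed so that the constraint is automatic on the relevant finite sets — this costs nothing since we are free to choose the injection $\pi$.

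The main obstacle I anticipate is bookkeeping, not mathematics: making a single formula for $f_\nu$ (or three parallel formulas) that simultaneously (i) realizes $f_\nu(x)-f_\nu(y)$ as a $\pm1$ sum of precisely $d_X(x,y)$ distinct basis vectors in increasing-index order, (ii) respects the constraint $k\le n_1$ needed to invoke Proposition \ref{P:2}, and (iii) works uniformly for the tree, Hamming, and symmetric-difference metrics despite their differing combinatorial structure (tree metric counts $|\xb|+|\yb|-2|\xb\wedge\yb|$, Hamming counts positionwise disagreements plus a length difference). For the tree and Hamming metrics the key observation is that two finite sequences agreeing on an initial segment contribute cancelling terms only when the \emph{position-indexed} vectors coincide, which is exactly why one should encode ``value $x_i$ in position $i$'' rather than just ``value $x_i$''; for $d_\symdif$ the plain indicator sum is correct and in fact cleaner. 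Once the right encoding is fixed, both inequalities reduce to a direct application of the triangle inequality (upper bound) and Proposition \ref{P:2} (lower bound), with all the slack hidden in the constant $8(1+\nu)$.
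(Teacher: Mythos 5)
Your high-level plan — pass via Proposition~\ref{P:2} to a good subsequence $\yn$, then define $f_\nu$ coordinatewise so that $f_\nu(x)-f_\nu(y)$ becomes a $\pm 1$ combination of roughly $d_X(x,y)$ of the $y_n$'s, and apply the spreading-model lower bound — is exactly the paper's approach, and your formulas for $d_\symdif$ (plain indicator sum) and $d_H$ (position-value injection $\pi(i,n_i)$, with an extra factor $\tfrac12$) match. But there are two genuine gaps.

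First, your fix for the constraint \emph{$k\le n_1$} in Proposition~\ref{P:2} does not work. A fixed shift or reindexing of $\yn$ cannot help: on $(\fin\bn,d_\symdif)$, take $\nbar=\{1,\dots,d\}$ and $\mbar=\emptyset$; after any fixed shift by $R$ the signed sum has $d$ terms with smallest index $R+1$, and once $d>R+1$ the hypothesis $k\le n_1$ fails. Since $d$ is unbounded, no deletion of finitely many terms suffices. The paper's actual fix is different and is the source of the constant $8$: because the indices $q_1<\cdots<q_d$ in the cancelled sum are distinct positive integers, one always has $q_i\ge i$, so $q_{\lfloor d/2\rfloor+1}\ge \lfloor d/2\rfloor+1$; one then uses the bimonotonicity of the basis (guaranteed by Proposition~\ref{P:2} with constant $\le 2(1+\vep)$) to pass to the second half of the sum, to which Proposition~\ref{P:2} applies directly, and finally uses $\varphi_E(d)\le 2\varphi_E(\lceil d/2\rceil)$. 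This ``drop the first half'' step costs a factor $\approx 4$ and, combined with the $\vep$'s, yields the $\tfrac1{8(1+\nu)}$ in \eqref{E:3.1.1}.

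Second, the position-value encoding $\nbar\mapsto\sum_i y_{\pi(i,n_i)}$ is correct for $d_H$ but fails for $d_T$. Take $\mbar=\{2,3\}$ and $\nbar=\{1,3\}$: then $\mbar\wedge\nbar=\emptyset$, so $d_T(\mbar,\nbar)=4$, but the position-value encoding cancels the third coordinate and leaves only two terms, so one can only reach $\varphi_E(2)$, not $\varphi_E(4)$. The tree metric counts the symmetric difference of the \emph{sets of prefixes} (ancestors in $T_\infty^\omega$), not positionwise disagreements; the paper therefore encodes $\nbar\mapsto\sum_{\ubar\preceq_T\nbar} y_{\Phi(\ubar)}$ with $\Phi:\fin\bn\to\bn$ a bijection, so that after cancellation one gets exactly $d_T(\mbar,\nbar)$ terms.
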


\begin{proof}
Let $\vep>0$ such that $(1+\vep)^2\le (1+\nu)$. By Proposition \ref{P:2}  there is a weakly null normalized basic sequence $\yn$ with basis constant  not larger than $(1+\vep)$,
 and thus the bimonotonicity constant is at most $2(1+\vep)$, generating a spreading model $E$, such that for all $k\ge1$, for all $k\le n_1< n_2< \dots < n_{k}$, and for all $(\vep_i)_{i=1}^{k}\in\{-1,1\}^k$ one has
\begin{equation}\label{E:3.2}
\Big\|\sum_{i=1}^{k}\vep_iy_{n_i}\Big\|\ge \frac{1}{1+\vep}\Big\|\sum_{i=1}^{k}\vep_ie_i\Big\| \ge \frac{1}{2(1+\vep)}\varphi_E(k).
\end{equation}
We now consider the three different cases.

\noindent
{\bf The symmetric difference metric case.}
Let $f\colon (\fin{\bn},d_\symdif)\to Y$  be defined by $f(\nbar)=\ds\sum_{i=1}^k y_{n_i}$, for $\nbar=\{n_1,n_2,\ldots, n_k\}\in[\bn]^{<\omega}$. The map $f$ is clearly $1$-Lipschitz since after cancellations  it follows for $\mbar,\nbar\in[\bn]^{<\omega}$ that
$$\|f(\mbar)-f(\nbar)\|=\Big\|\sum_{i=1}^{d_\symdif(\mbar,\nbar)}\vep_iy_{q_i}\Big\|$$ for some $q_1<q_2<\dots<q_{d_\symdif(\mbar,\nbar)}$ and $(\vep_i)_{i=1}^{d_\symdif(\mbar,\nbar)}\in\{-1,1\}^{d_\symdif(\mbar,\nbar)}$. If $d=d_\symdif(\mbar,\nbar)=2r$ is even, then $q_{r+1}\ge r$, and hence  \eqref{E:3.2} and  the assumption on the bimonotonicity constant of $\yn$ yield
\begin{align*}
\|f(\mbar)-f(\nbar)\|&\ge \frac{1}{2(1+\vep)}\Big\|\sum_{i=r+1}^{d}\vep_iy_{q_i}\Big\|\ge \frac{1}{2(1+\vep)^2}\Big\|\sum_{i=r+1}^{d}\vep_ie_i\Big\|\\
                             &\ge \frac{1}{4(1+\vep)^2}\varphi_E(r)\ge \frac{1}{8(1+\vep)^2}\varphi_E(d).
\end{align*}
If $d$ is odd, the proof is similar, as will be seen while treating the next case.

\noindent
{\bf The tree metric case.}
Let $\Phi\colon [\bn]^{<\omega}\to \bn$ be a bijection.
Let $f\colon (\fin{\bn},d_T)\to Y$ be defined by $f(\nbar)=\ds\sum_{\ubar\preceq \nbar}y_{\Phi(\ubar)}$, for $\nbar\in[\bn]^{<\omega}$.
The map $f$ is clearly $1$-Lipschitz since after cancellations one has
$$\|f(\mbar)-f(\nbar)\|=\Big\|\sum_{i=1}^{d_T(\mbar,\nbar)}\vep_iy_{q_i}\Big\|,$$ for some $q_1<q_2<\dots<q_{d_T(\mbar,\nbar)}$ and $(\vep_i)_{i=1}^{d_T(\mbar,\nbar)}\in\{-1,1\}^{d_T(\mbar,\nbar)}$. Here we only detail the case when $d=d_T(\mbar,\nbar)$ is odd, say  $d=2r-1$, with $r\in \bn$. Note that $q_r\ge r$. So, again   \eqref{E:3.2} and
 the assumption on the bimonotonicity  constant of $\yn$ imply that
\begin{align*}
\|f(\mbar)-f(\nbar)\|&
\ge \frac{1}{2(1+\vep)}\Big\|\sum_{i=r}^d\vep_i y_{n_i}\Big\|
\ge \frac{1}{2(1+\vep)^2}\Big\|\sum_{i=r}^{d}\vep_ie_{i}\Big\|\\
                             &\ge \frac{1}{4(1+\vep)^2}\varphi_E(r) \ge \frac{1}{8(1+\vep)^2}\varphi_E(2r)\ge \frac{1}{8(1+\vep)^2}\varphi_E(d).
\end{align*}

\noindent
{\bf The Hamming metric case.}
Let $\Phi\colon \bn\times\bn\to \bn$ be a bijection and let $f\colon ([\bn]^k,d_H)\to Y$ be defined by $f(\nbar)=\ds\frac12\sum_{i=1}^{k}y_{\Phi(i,n_i)}$.
The map $f$ is clearly $1$-Lipschitz since after cancellations one has $$\|f(\mbar)-f(\nbar)\|=\frac12\Big\|\sum_{i=1}^{2d_H(\mbar,\nbar)}\vep_iy_{q_i}\Big\|$$ for some $q_1<q_2<\dots<q_{2d_H(\mbar,\nbar)}$ and $(\vep_i)_{i=1}^{2d_H(\mbar,\nbar)}\in\{-1,1\}^{2d_H(\mbar,\nbar)}$. Necessarily $q_{d_H(\mbar,\nbar)}\ge d_H(\mbar,\nbar)$, and hence, as in the other cases,
\begin{align*}
\|f(\mbar)-f(\nbar)\|&\ge \frac{1}{4(1+\vep)}\Big\|\sum_{i=d_H(\mbar,\nbar)+1}^{2d_H(\mbar,\nbar)}\vep_iy_{q_i}\Big\|
                             \ge \frac{1}{8(1+\vep)^2}\varphi_E\left(d_H(\mbar,\nbar)\right).
\end{align*}

\end{proof}

\begin{rem} \label{R:3.1a} We restricted in Lemma \ref{L:3.1} the Hamming metric $d_H$ to $[\bn]^k$, for $k\in\bn$, because $d_H$ is usually defined for strings of equal length. But it is not hard to show the existence of a map
$f:[\bn]^{<\omega}\to X$, satisfying condition \eqref{E:3.1.1}, if we replace the value $8$ by a larger number.
\end{rem}

Lemma \ref{L:3.1} yields a coarse embedding of $(\fin{\bn},d_T)$, $(\fin{\bn},d_\symdif)$, and an equi-coarse embedding of the sequence $([\bn]^k,d_H)_{k\in\bn}$ into every Banach space which admits at least one spreading model, that is not isomorphic to $\co$ and is generated by a weakly null sequence. Recall that Proposition \ref{P:4} in Section \ref{S:2.3} insures that the associated fundamental function is unbounded. Then, the following proposition follows simply from Lemma \ref{L:3.1}.

\begin{cor}\label{C:3.2}
The metric spaces $(\fin{\bn},d_T)$, $(\fin{\bn},d_\symdif)$, are all coarsely embeddable,
and the sequence $([\bn]^k,d_H)_{k\in\bn}$ is equi-coarsely  embeddable into a Banach space that has a spreading model $E$, generated by a normalized weakly null sequence, which is not isomorphic to $\co$.\\
More precisely, in all three cases, for any $\nu>0$ there is a coarse embedding $f_\nu$ into $Y$  such that $f_\nu$ is $1$-Lipschitz and the modulus of compression of $f_\nu$ satisfies
$\rho_{f_\nu}(t)\ge \frac{1}{8(1+\nu)}\varphi_E(t)$, for $t>0$, where $\varphi_E$ is the fundamental function of the spreading model $E$.
\end{cor}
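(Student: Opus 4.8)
The plan is to read off Corollary \ref{C:3.2} from Lemma \ref{L:3.1}, the only extra input being that the fundamental function $\varphi_E$ of a spreading model not isomorphic to $\co$ tends to infinity. So, fix a Banach space $Y$ carrying a spreading model $E$, generated by a normalized weakly null sequence, with $E$ not isomorphic to $\co$, and let $\varphi_E$ be its fundamental function. I would first record two facts about $\varphi_E$. It is non-decreasing: the fundamental sequence $\ei$ of $E$ is a normalized $1$-suppression unconditional basic sequence (Proposition \ref{P:2} and the discussion preceding it), so $\varphi_E(k-1)=\|\sum_{i<k}e_i\|_E\le\|\sum_{i\le k}e_i\|_E=\varphi_E(k)$. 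And it is unbounded: this is precisely where $E\not\cong\co$ enters, via Proposition \ref{P:4} (as already noted in the paragraph preceding the statement), since a subsequence of the generating sequence still generates $E$ and, by $1$-unconditionality, $\varphi_E(k)\ge\inf_{(\vep_i)\in\{-1,1\}^k}\|\sum_{i=1}^k\vep_i e_i\|_E$. Extending $\varphi_E$ to $[0,\infty)$ by $\varphi_E(t):=\varphi_E(\lceil t\rceil)$ keeps it non-decreasing with $\lim_{t\to\infty}\varphi_E(t)=+\infty$.

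Next, for each $\nu>0$ I apply Lemma \ref{L:3.1} to each of the three metric spaces $(\fin{\bn},d_T)$, $(\fin{\bn},d_\symdif)$ and $([\bn]^k,d_H)$ ($k\in\bn$), producing a $1$-Lipschitz map $f_\nu$ into $Y$ with
\[
\tfrac{1}{8(1+\nu)}\varphi_E\big(d_X(x,y)\big)\le\|f_\nu(x)-f_\nu(y)\|_Y\le d_X(x,y)\qquad(x,y\in X).
\]
Because $f_\nu$ is $1$-Lipschitz, $\omega_{f_\nu}(t)\le t<\infty$ for every $t>0$. Because all the metrics involved are integer-valued, $\rho_{f_\nu}(t)=\inf\{\|f_\nu(x)-f_\nu(y)\|_Y:\ d_X(x,y)\ge\lceil t\rceil\}\ge\tfrac{1}{8(1+\nu)}\varphi_E(\lceil t\rceil)=\tfrac{1}{8(1+\nu)}\varphi_E(t)$, which tends to $\infty$ by the second fact above. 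Hence each $f_\nu$ is a coarse embedding with the claimed lower bound $\rho_{f_\nu}(t)\ge\tfrac{1}{8(1+\nu)}\varphi_E(t)$ on its compression modulus, which gives the coarse embeddability of $(\fin{\bn},d_T)$ and $(\fin{\bn},d_\symdif)$.

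For the Hamming case, the map $f_\nu$ obtained from Lemma \ref{L:3.1} depends on $k$, but the bounds $\rho(t):=\tfrac{1}{8(1+\nu)}\varphi_E(t)$ and $\omega(t):=t$ do not: both are non-decreasing on $[0,\infty)$, $\omega(t)<\infty$ for all $t>0$, and $\rho(t)\to\infty$ as $t\to\infty$, with $\rho\le\rho_{f_\nu}$ and $\omega_{f_\nu}\le\omega$ for every $k$. By the definition of equi-coarse embeddability recalled in Section \ref{S:2}, this shows $([\bn]^k,d_H)_{k\in\bn}$ equi-coarsely embeds into $Y$. I do not expect a genuine obstacle here: the substantive content is already carried by Lemma \ref{L:3.1}, and the two points deserving a line of care are the monotonicity-and-unboundedness of $\varphi_E$ (which reduces, through $1$-suppression unconditionality of the fundamental sequence and Proposition \ref{P:4}, to the hypothesis $E\not\cong\co$) and the observation that the compression and expansion estimates in the Hamming case are uniform in $k$.
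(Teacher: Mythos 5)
Your proof is correct, and it takes the same route as the paper, which disposes of this corollary in one sentence by saying it ``follows simply from Lemma~\ref{L:3.1}'' combined with the unboundedness of $\varphi_E$ coming from Proposition~\ref{P:4}; you simply make explicit what the paper leaves implicit. The two points you flag as needing care — monotonicity of $\varphi_E$ (so that $\inf_{d\ge t}\varphi_E(d)=\varphi_E(\lceil t\rceil)$, which is what $\rho_{f_\nu}$ actually sees) and uniformity of the bounds in $k$ for the Hamming family — are exactly the right ones, and both are handled correctly. One small terminological slip: you appeal to ``$1$-unconditionality'' of the fundamental sequence, but what Proposition~\ref{P:2} and the surrounding discussion give you is $1$-suppression unconditionality (hence only $2$-unconditionality). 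Fortunately the inequality you attach to this, $\varphi_E(k)\ge\inf_{(\vep_i)\in\{-1,1\}^k}\|\sum_i\vep_i e_i\|_E$, is trivially true regardless, since the constant sign sequence $\vep_i\equiv1$ is one of the competitors in the infimum; only the monotonicity claim genuinely uses $1$-suppression unconditionality.
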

For  the tree metric we can  deduce  the following  embeddability  result, even if all spreading models are equivalent to $c_0$.
\begin{prop}\label{P:7} If an infinite dimensional Banach space $Y$ has a spreading model isomorphic to $c_0$ then $(T_k^\omega)_{k\ge 1}$ equi-bi-Lipschitzly embeds into $Y$. \\
More precisely, for every $\nu>0$ and every $k\in\bn$ there exists a map $f_k\colon T_k^\omega\to Y$, such that for all $x,y\in T_k^\omega$,
\begin{equation}
\frac{1}{2(1+\nu)}d_T(x,y)\le \|f_k(x)-f_k(y)\|_Y\le d_T(x,y).
\end{equation}

\end{prop}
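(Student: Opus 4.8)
The plan is to build the maps $f_k\colon T_k^\omega\to Y$ explicitly, in the same spirit as the tree-metric case of Lemma \ref{L:3.1}, but this time exploiting the fact that a $c_0$-spreading model gives us an \emph{upper} bound on norms of signed sums of many well-separated basis vectors, not just the lower bound coming from the fundamental function. Concretely, fix $\nu>0$ and $\vep>0$ with $(1+\vep)^2\le 1+\nu$. By Proposition \ref{P:3}, for such $\vep$ the space $Y$ has a spreading model $E$ whose fundamental sequence $\ei$ is $(1+\vep)$-equivalent to the canonical basis of $\co$; combining with Proposition \ref{P:2} (applied after passing to a further subsequence), we obtain a normalized weakly null basic sequence $\yn$ in $Y$ with basis constant at most $(1+\vep)$ such that for every $k\ge 1$, every $k\le n_1<\dots<n_k$ and every choice of signs $(\vep_i)_{i=1}^k\in\{-1,1\}^k$ one has, for suitable constants, both a lower estimate $\big\|\sum_{i=1}^k\vep_i y_{n_i}\big\|\ge \tfrac{1}{2(1+\vep)}$ and, crucially, an upper estimate $\big\|\sum_{i=1}^k\vep_i y_{n_i}\big\|\le (1+\vep)^2\cdot 1$ coming from the $\co$-equivalence of the fundamental sequence. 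The only subtlety is the passage from ``signs and indices $\ge k$'' to ``signs and arbitrary increasing indices,'' which is the usual spreading-model tail argument: since finitely many initial terms are irrelevant up to basis constant, after relabelling we may assume these estimates hold for \emph{all} finite increasing tuples.

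Next I would set up the embedding exactly as in the tree case of Lemma \ref{L:3.1}. Fix a bijection $\Phi\colon \fin{\bn}\to\bn$ chosen so that $|\ubar|<|\vbar|$ implies $\Phi(\ubar)<\Phi(\vbar)$ (i.e.\ $\Phi$ is level-monotone on the tree; this is easily arranged since each level is infinite, but we only need it for the restriction to $T_k^\omega=[\bn]^{\le k}$, where one can further demand that the value $\Phi(\ubar)$ exceeds $k$ and, more importantly, that the ``height index'' controls the size of $\Phi$). Then define $f_k(\nbar)=\sum_{\ubar\preceq_T\nbar} y_{\Phi(\ubar)}$ for $\nbar\in T_k^\omega$. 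After cancellation, for $\mbar,\nbar\in T_k^\omega$ one has $\|f_k(\mbar)-f_k(\nbar)\|=\big\|\sum_{i=1}^{d_T(\mbar,\nbar)}\vep_i y_{q_i}\big\|$ for some increasing $q_1<\dots<q_{d_T(\mbar,\nbar)}$ and signs $\vep_i=\pm1$; in fact the vectors appearing are indexed by the nodes on the two branches strictly below $\mbar\land\nbar$, with $+$ signs on the $\mbar$-side and $-$ signs on the $\nbar$-side. The lower bound $\|f_k(\mbar)-f_k(\nbar)\|\ge \tfrac{1}{2(1+\nu)}d_T(\mbar,\nbar)$ follows verbatim as in Lemma \ref{L:3.1}: restrict to (at least) half of the summands, which live at indices $\ge$ their count, apply the lower spreading-model estimate together with the bimonotonicity bound $2(1+\vep)$; the $\co$-equivalence even gives the cleaner constant $\tfrac{1}{2(1+\nu)}$ since $\big\|\sum_{i\in A}\vep_i e_i\big\|\ge \tfrac{1}{1+\vep}$ and no $\varphi_E$ growth is needed.

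The genuinely new point, compared to Lemma \ref{L:3.1}, is the \emph{upper} Lipschitz bound $\|f_k(\mbar)-f_k(\nbar)\|\le d_T(\mbar,\nbar)$. In Lemma \ref{L:3.1} this was the trivial direction (one always gets $1$-Lipschitz for free from the $\co$-type cancellation giving $\le 1$ via the triangle inequality applied to the $\co$-spreading model), and here it is the same: since $\ei$ is $(1+\vep)$-equivalent to the $\co$-basis, $\big\|\sum_{i=1}^{d}\vep_i e_i\big\|\le (1+\vep)$, hence $\big\|\sum_{i=1}^{d}\vep_i y_{q_i}\big\|\le (1+\vep)\big\|\sum_{i=1}^d \vep_i e_i\big\|\le (1+\vep)^2\le 1+\nu$ for each \emph{fixed} difference — but this bound is independent of $d=d_T(\mbar,\nbar)$, so it does \emph{not} directly give $1$-Lipschitz, only boundedness. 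To upgrade boundedness to the stated Lipschitz estimate one argues along a path in the tree: write $d_T(\mbar,\nbar)=d_T(\mbar,\mbar\land\nbar)+d_T(\mbar\land\nbar,\nbar)$ and, along the chain of immediate predecessors connecting $\mbar$ to $\nbar$ through $\mbar\land\nbar$, each single edge moves $f_k$ by exactly one vector $\pm y_{\Phi(\ubar)}$, which has norm $1$; summing the $d_T(\mbar,\nbar)$ unit steps via the triangle inequality gives $\|f_k(\mbar)-f_k(\nbar)\|\le d_T(\mbar,\nbar)$. Thus the map is $1$-Lipschitz for the graph/tree metric, and the displayed two-sided estimate of Proposition \ref{P:7} holds with the claimed constants. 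I expect the main obstacle to be purely bookkeeping: making sure the chosen enumeration $\Phi$ places every relevant family of vectors in the ``spreading regime'' (indices at least as large as their count) so that the subsequence estimates of Proposition \ref{P:2}/Proposition \ref{P:3} apply uniformly over all pairs in $T_k^\omega$ and over all $k$ simultaneously — this is handled by composing $\Phi$ with a sufficiently fast-growing reindexing of $\yn$, exactly as in the proof of Lemma \ref{L:3.1}.
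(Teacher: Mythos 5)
Your proposal has a genuine gap, and it is precisely at the point you flagged as ``the usual'' part: the lower bound. You define $f_k(\nbar)=\sum_{\ubar\preceq_T\nbar} y_{\Phi(\ubar)}$, the single-sum map used in the tree case of Lemma \ref{L:3.1}. With that map, after cancellation $f_k(\mbar)-f_k(\nbar)=\sum_{i=1}^{d}\vep_i y_{q_i}$ with $d=d_T(\mbar,\nbar)$ and $\vep_i=\pm1$. Now the spreading-model estimate gives $\|\sum_{i=1}^d\vep_iy_{q_i}\|\approx\|\sum_{i=1}^d\vep_ie_i\|_E$, and since $E\cong\co$ this is $\approx\sup_i|\vep_i|=1$. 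That is a \emph{constant} bound from below (and from above), not a bound that grows like $d$. In Lemma \ref{L:3.1} the lower bound is $\gtrsim\varphi_E(d)$, which is useful there because the fundamental function $\varphi_E$ is unbounded precisely when $E\not\cong\co$; here $\varphi_E\equiv 1$, so that estimate collapses. Your sentence ``the $\co$-equivalence even gives the cleaner constant $\tfrac{1}{2(1+\nu)}$ since $\|\sum_{i\in A}\vep_ie_i\|\ge\tfrac1{1+\vep}$ and no $\varphi_E$ growth is needed'' is exactly where the error is: you do need the growth in $d$, and the single-sum map cannot produce it because its increments are $\pm1$-sums and the sup-norm of a $\pm1$-sum is $1$, not $d$. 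Consequently your $f_k$ is merely bounded and $1$-Lipschitz; it is not a bi-Lipschitz embedding of $T_k^\omega$ at all.

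The key idea you are missing, which is what the paper does, is to change the map so that the cancelled differences have \emph{integer coefficients growing linearly in $d_T$}, and then exploit that the $\co$-norm is a sup-norm. Concretely, the paper uses the double sum $f_k(\nbar)=\sum_{\ubar\preceq\nbar}\sum_{\sbar\preceq\ubar}y_{\Phi(\sbar)}$, so that each ancestor $\sbar$ of $\nbar$ at depth $j$ appears with coefficient $|\nbar|-j+1$. Then $f_k(\mbar)-f_k(\nbar)=\sum_i\alpha_iy_{q_i}$ where the $\alpha_i$ are integers of size up to $d_T(\mbar,\nbar)$ with $\tfrac12 d_T(\mbar,\nbar)\le\sup_i|\alpha_i|\le d_T(\mbar,\nbar)$, and the $\co$-equivalence yields $\|f_k(\mbar)-f_k(\nbar)\|\approx\sup_i|\alpha_i|\approx d_T(\mbar,\nbar)$, which gives both Lipschitz bounds with the claimed constants. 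Your diagnosis of which direction is new was also reversed: the upper bound is the easy one (either by the unit-step triangle-inequality argument you gave, or directly by $\sup_i|\alpha_i|\le d_T$ plus the $\co$-upper estimate); what genuinely requires the new map is the linear lower bound.
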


\begin{proof} By Proposition \ref{P:3} one may assume that there exists a normalized sequence $\yn$ such that for all $k\ge1$, for all $k\le n_1< n_2< \dots < n_{k}$, and for all $(a_i)_{i=1}^{k}\subset\br$ one has
\begin{equation}
\frac{1}{(1+\nu)}\sup_{1\le i\le k} |a_i|\le\Big\|\sum_{i=1}^{k}a_iy_{n_i}\Big\|\le (1+\nu)\sup_{1\le i\le k} |a_i|.
\end{equation}
Let $\Phi_k\colon \sizelek{\bn}\to \{2k,2k+1,\dots\}$ be a bijection. For $k\in\bn$
let $f_k\colon T_k^\omega\to Y$ be defined by $f_k(\nbar)=\ds\sum_{\ubar\preceq \nbar}\sum_{\sbar\preceq \ubar}y_{\Phi(\sbar)}$, for $\nbar\in[\bn]^{\le k}$.
After cancellations one has
 $$\|f_k(\mbar)-f_k(\nbar)\|=\Big\|\sum_{i=1}^{d_T(\mbar,\nbar)}\alpha_iy_{q_i}\Big\|$$
 for some $2k\le q_1<\cdots<q_{d_T(\mbar,\nbar)}$ and $(\alpha_i)_{i=1}^{d_T(\mbar,\nbar)}\subset\{1,\dots, d_T(\mbar,\nbar)\}$ satisfying $\frac12 d_T(\mbar,\nbar) \le \sup_{1\le i\le d_T(\mbar,\nbar)} |\alpha_i|\le d_T(\mbar,\nbar)$. So
\begin{align*}
\|f_k(\mbar)-f_k(\nbar)\|&\ge \frac{1}{(1+\nu)}\sup_{1\le i\le d_T(\mbar,\mbar)} |\alpha_i|\ge \frac{1}{2(1+\nu)}d_T(\mbar,\nbar)
\end{align*}
and
\begin{align*}
\|f_k(\mbar)-f_k(\nbar)\|&\le (1+\nu)\sup_{1\le i\le d_T(\mbar,\nbar)} |\alpha_i|\le (1+\nu)d_T(\mbar,\nbar),
\end{align*}which proves our claim.
\end{proof}
\begin{rem}\label{R:2}
If an infinite dimensional Banach space $Y$ has a spreading model isomorphic to $c_0$ then, using a duality argument \cite[Proposition 1, p. 80]{BeauzamyLapreste1984}, the relationship between the $\ell_1^+$-weakly null index and the Szlenk index \cite[Theorem 4.2]{AJO2005}, and the embedding in \cite[Theorem 2.6]{BKL2010}, one can show that $T_\infty^\omega$ embeds bi-Lipschitzly into $Y$.

Moreover, since it is easy to see that every countable tree isometrically embeds into $T_\infty^\omega$ we obtain that every countable tree coarsely embeds into any infinite dimensional Banach space.
\end{rem}

In the following theorem, which includes Theorem E, a purely metric characterization of finite dimensionality in terms of graph preclusion in the coarse category is given.

\begin{thm}
Let $Y$ be a Banach space. The following assertions are equivalent:

\begin{enumerate}
\item $Y$ is finite dimensional.
\item $(T_k^\omega)_{k\ge1}$ does not equi-coarsely embed into $Y$.
\item $T_\infty^\omega$ does not coarsely embed into $Y$.
\end{enumerate}
\end{thm}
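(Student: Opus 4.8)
The plan is to prove the cycle of implications $(1)\Rightarrow(3)\Rightarrow(2)\Rightarrow(1)$, where the first two implications are easy and the real content is $(2)\Rightarrow(1)$, for which I would argue by contraposition: assuming $Y$ is infinite-dimensional, I want to produce an equi-coarse embedding of $(T_k^\omega)_{k\ge1}$ into $Y$.

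\textbf{The easy implications.} For $(1)\Rightarrow(3)$: if $Y$ is finite-dimensional it is proper (closed balls are compact), while $T_\infty^\omega$ is an infinite, locally infinite graph; a coarse embedding of $T_\infty^\omega$ into $Y$ would force infinitely many points into a bounded region of $Y$ that are pairwise separated by a fixed positive distance (the neighbors of a fixed vertex all sit at distance $1$ from it, hence in a ball of fixed radius around $f(\text{root})$, yet are pairwise at distance $2$ and thus, via $\rho_f$, uniformly separated in $Y$) — contradicting compactness of bounded sets in finite dimensions. For $(3)\Rightarrow(2)$: this is the standard observation that each $T_k^\omega$ embeds isometrically into $T_\infty^\omega$ (just truncate at height $k$), so an equi-coarse embedding of the family $(T_k^\omega)_{k\ge1}$ can be assembled from a single coarse embedding of $T_\infty^\omega$ by restriction, with uniform moduli; contrapositively, if $T_\infty^\omega$ coarsely embeds then the family equi-coarsely embeds.

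\textbf{The main implication $(2)\Rightarrow(1)$.} Suppose $Y$ is infinite-dimensional. Since $Y$ is infinite-dimensional, it contains a normalized weakly null sequence (take any normalized basic sequence — e.g. via Mazur's construction — and pass to a weakly null subsequence using that bounded sequences in a Banach space have weakly Cauchy-or-$\ell_1$ behavior; in any case a normalized weakly null sequence exists in every infinite-dimensional Banach space by a standard argument, e.g. Rosenberg/Josefson–Nissenzweig gives a weak$^*$-null normalized sequence in $Y^*$, and dually one extracts a normalized weakly null sequence in $Y$, or more elementarily one uses that the unit sphere is not weakly compact unless $Y$ is reflexive and a separate argument in the reflexive case). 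This weakly null sequence generates some spreading model $E$. Now split into two cases. If $E$ is not isomorphic to $\co$, then by Corollary \ref{C:3.2} (applied with the tree metric, or directly) $(\fin{\bn},d_T)$ coarsely embeds into $Y$, hence $(T_k^\omega)_{k\ge1}=([\bn]^{\le k},d_T)_{k\ge1}$ equi-coarsely embeds (restriction again, with uniform moduli $\rho_{f_\nu},\omega$). If $E$ is isomorphic to $\co$, then by Proposition \ref{P:7} the family $(T_k^\omega)_{k\ge1}$ equi-bi-Lipschitzly — in particular equi-coarsely — embeds into $Y$. In either case $(2)$ fails. This completes the contrapositive, hence $(2)\Rightarrow(1)$, and closes the cycle.

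\textbf{Where the difficulty lies.} The two nontrivial embeddability inputs — Corollary \ref{C:3.2} in the non-$\co$ case and Proposition \ref{P:7} in the $\co$ case — are already established in the excerpt, so the only genuinely delicate point in the write-up is the assertion that \emph{every} infinite-dimensional Banach space admits a normalized weakly null sequence (equivalently, has a weakly null basic sequence, which then, after passing to a subsequence, generates a spreading model). The cleanest route is: every infinite-dimensional Banach space contains a basic sequence (Banach–Mazur), and every bounded sequence in a Banach space has a subsequence that is either weakly Cauchy or equivalent to the $\ell_1$-basis; in the non-$\ell_1$ case a weakly Cauchy normalized basic sequence yields, after taking successive differences (and renormalizing), a normalized weakly null basic sequence, while if it is not weakly null to begin with one can subtract the weak limit — but a cleaner and self-contained statement is simply to invoke that $Y$ infinite-dimensional implies $Y$ contains a normalized weakly null sequence unless $Y$ is isomorphic to a subspace with the Schur property, and $\ell_1$-type obstructions are handled by the $\ell_1$-spreading-model branch (which is $\not\cong\co$, so Corollary \ref{C:3.2} applies). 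I expect this case bookkeeping around the existence of the weakly null sequence to be the main obstacle, but it is entirely standard Banach space theory and can be dispatched in a couple of lines with the right citation.
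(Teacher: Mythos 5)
There is a genuine gap in your treatment of the case when $Y$ contains $\ell_1$ (or, more generally, has the Schur property). Your proof of $(2)\Rightarrow(1)$ rests on producing a normalized weakly null sequence in $Y$, and you correctly identify Rosenthal's $\ell_1$-theorem as the relevant dichotomy; in the non-$\ell_1$ branch, taking normalized successive differences of a weakly Cauchy non-convergent sequence does yield a normalized weakly null sequence, and from there the case split between $\co$ and non-$\co$ spreading models, using Proposition~\ref{P:7} and Corollary~\ref{C:3.2} respectively, is exactly what the paper does. But in the $\ell_1$ branch your proposed fix — ``$\ell_1$-type obstructions are handled by the $\ell_1$-spreading-model branch (which is $\not\cong\co$, so Corollary~\ref{C:3.2} applies)'' — is circular: Corollary~\ref{C:3.2} (via Lemma~\ref{L:3.1}) requires a spreading model \emph{generated by a normalized weakly null sequence}, and a space with the Schur property has no normalized weakly null sequence at all, so the hypothesis fails. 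The existence claim you float earlier, that every infinite-dimensional Banach space contains a normalized weakly null sequence, is false ($\ell_1$ is a counterexample), and the Josefson--Nissenzweig route you sketch does not ``dualize'' in the way you suggest. Also, your alternative of ``subtracting the weak limit'' from a weakly Cauchy sequence is not available in general since the weak limit may only exist in $Y^{**}$.

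The paper closes this gap with a direct observation: $\ell_1$ contains an isometric copy of $T_\infty^\omega$ (hence of each $T_k^\omega$). Concretely, for an injection $\Phi\colon [\bn]^{<\omega}\setminus\{\emptyset\}\to\bn$, the map $\nbar\mapsto\sum_{\emptyset\neq\ubar\preceq\nbar}e_{\Phi(\ubar)}$ into $\ell_1$ satisfies $\|f(\mbar)-f(\nbar)\|_1=d_T(\mbar,\nbar)$ after cancellation. So if Rosenthal's dichotomy puts you in the $\ell_1$ case, you are already done, without any appeal to spreading models. Inserting this observation repairs your argument, and with that patch your proof becomes essentially the paper's proof (the paper proves $(1)\iff(2)$ directly and then closes the loop through $(3)$, whereas you run the cycle $(1)\Rightarrow(3)\Rightarrow(2)\Rightarrow(1)$, but this is a cosmetic difference).
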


\begin{proof}
We start with a self contained elementary proof of the main equivalence $(1)\iff (2)$, which avoids the use of Remark \ref{R:2}.\\
By compactness, a finite dimensional Banach space cannot contain an infinite sequence that is bounded and separated. On the other hand, if $Y$ is an infinite dimensional Banach space, then by Rosenthal's $\ell_1$-theorem, it either contains $\ell_1$ isomorphically or it has a weakly Cauchy sequence which is not norm converging. Since $\ell_1$ contains an isometric copy of $T_\infty^\omega$, we may assume that $X$ contains a weakly Cauchy sequence $\xn$ which is not norm converging. After eventually passing to a subsequence of $\xn$, still denoted $\xn$, the sequence
$$\zn:=\Big(\frac{x_{2n-1}-x_{2n}}{\|x_{2n-1}-x_{2n}\|}\Big)_{n=1}^\infty$$
is normalized and weakly null. Therefore, either $\zn$ has a subsequence that generates a spreading model isomorphic to $\co$ and we apply Proposition \ref{P:7} or $\zn$ has a subsequence that generates a spreading model not isomorphic to $\co$ and we conclude with
Corollary \ref{C:3.2}.\\
We now finish the circle of implications: $(2)  \implies (3)$ is obvious and $(3)\implies (1)$ follows from Corollary \ref{C:3.2} and Remark \ref{R:2}.
\end{proof}

\section{The concentration inequalities}\label{S:4}
All the results of this section will be valid for the spaces $[\bn]^k$, $k\in \bn$, equipped either with the Johnson metric $d_J=\frac{d_\symdif}{2}$ or the Hamming metric $d_H$. So throughout the section, $d_{\bullet}$ will denote either $d_J$ or $d_H$. We start with a general structural result on Lipschitz maps from $([\bn]^k,d_{\bullet})$ into a reflexive space with a basis. For two vectors $x$ and $y$ in a Banach space with a basis, we shall use freely the convenient notation $x\prec y$ to mean that $\supp(x)<\supp(y)$, and $k\preceq x$ when $k\le \min(\supp(x))$, where the supports are with respect to the basis considered.

\begin{prop}\label{mainprop} Let $Y$ be a reflexive Banach space with a bimonotone basis $(e_i)_{i=1}^\infty$. Then, for all $ k,r \in \bn$, $\vep>0$, $\bm\in[\bn]^\omega$, and Lipschitz maps $f\colon ([\bm]^k,d_{\bullet})\to Y$ there exist  $\bm'\in[\bn]^\omega$ and $y\in Y$ satisfying the following:

For all $\mbar\in[\bm']^k$ there exist $r\preceq y_{\mbar}^{(1)}\prec y_{\mbar}^{(2)}\prec\dots\prec y_{\mbar}^{(k)}$, all with finite supports with respect to  $\ei$, such  that:
\begin{equation}\label{E:support}
 \|y_{\mbar}^{(i)}\|\le \lip(f), \text{ for all $i\in\{1,2,\ldots, k\}$,}
\end{equation}
and
\begin{equation}\label{E:approx}
\|f(\mbar)-(y+y_{\mbar}^{(1)}+y_{\mbar}^{(2)}+\dots+y_{\mbar}^{(k)})\|<\vep.
\end{equation}

\end{prop}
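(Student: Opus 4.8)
The plan is to iterate a diagonal/Ramsey extraction $k$ times, peeling off one "block coordinate" of $f(\mbar)$ at each stage. First I would set up the single-step mechanism. Fix a Lipschitz map $g\colon([\bm]^j,d_\bullet)\to Y$ and think of $j$ as the number of "free" coordinates still to be resolved. For a fixed initial segment $\nbar$ of size $j-1$ (with $\max\nbar$ small), the function $p\mapsto g(\nbar\cup\{p\})$, $p>\max\nbar$, is a bounded sequence in the reflexive space $Y$, so it has a weakly convergent subsequence; passing to that subsequence, $g(\nbar\cup\{p\})\rightharpoonup z_\nbar$ for some $z_\nbar\in Y$. Since $Y$ is reflexive with a basis, $g(\nbar\cup\{p\})-z_\nbar$ is, for $p$ large along the subsequence, within $\vep'$ of a \emph{finitely supported} vector $w_p$ whose support starts arbitrarily far out to the right (the tail projections $I-P_N$ converge to $0$ pointwise on a weakly convergent-to-$0$ net only after a further subsequence, using the basis; more precisely one uses that $(I-P_N)h\to 0$ in norm as $N\to\infty$ for fixed $h$, combined with weak convergence to gliding-hump the differences off the first $N$ coordinates). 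Bimonotonicity gives $\|w_p\|\le\|g(\nbar\cup\{p\})-z_\nbar\|+\vep'\le 2\lip(g)+\vep'$; a small additional argument pushing the support of $w_p$ past $\min\supp(z_\nbar)$ and using bimonotonicity once more yields the cleaner bound $\|w_p\|\le\lip(g)$ that the statement asks for (here one exploits that $z_\nbar$ itself has, up to $\vep'$, finite support, and that $d_\bullet$-distance $1$ corresponds to changing one coordinate).

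Next I would make this uniform in $\nbar$ by a Ramsey/diagonalization argument. The initial segments $\nbar$ of size $j-1$ inside a fixed finite window are finitely many, so by successively refining $\bm$ I can arrange a single infinite set $\bm_1\in[\bm]^\omega$ on which, for \emph{every} admissible $\nbar\in[\bm_1]^{j-1}$ and every $p\in\bm_1$ with $p>\max\nbar$ (and $p$ not among the first few elements, to keep supports starting past $r$), $g(\nbar\cup\{p\})$ is within $\vep'$ of $z_\nbar + w_{\nbar,p}$ with $r\preceq w_{\nbar,p}$, $\|w_{\nbar,p}\|\le\lip(g)$, and $z_\nbar\prec w_{\nbar,p}$. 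Then I would restart the whole construction with the map $\nbar\mapsto z_\nbar$ on $[\bm_1]^{j-1}$ — this is again Lipschitz (with the same constant, since weak limits do not increase norms and $d_\bullet(\nbar,\nbar')=1$ forces $z_\nbar, z_{\nbar'}$ to be weak limits of sequences at mutual distance $\le\lip(g)$) — and peel off the next block. Iterating $k$ times produces nested infinite sets $\bm\supset\bm_1\supset\dots\supset\bm_k=:\bm'$, a vector $y\in Y$ (the final constant, coming from the $j=0$ case), and for each $\mbar=\{m_1<\dots<m_k\}\in[\bm']^k$ the blocks $y_\mbar^{(k)}:=w_{\{m_1,\dots,m_{k-1}\},m_k}$, $y_\mbar^{(k-1)}:=w_{\{m_1,\dots,m_{k-2}\},m_{k-1}}$, and so on down to $y_\mbar^{(1)}$, with the successive approximations accumulating a total error below $k\vep'<\vep$ after choosing $\vep'$ small. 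The support ordering $r\preceq y_\mbar^{(1)}\prec\dots\prec y_\mbar^{(k)}$ is exactly the $z_\nbar\prec w_{\nbar,p}$ relation propagated through the tower.

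The two bookkeeping points that need care are: (a) \textbf{support control} — at each peeling stage the newly extracted block $w_{\nbar,p}$ must land strictly to the right of all blocks extracted at earlier (outer) stages for the given $\mbar$; this is why one extracts the \emph{last} coordinate $m_k$ first (its block is the rightmost) and works inward, and at each stage one throws away enough small elements of the current infinite set so that the next weak-limit-approximation has support starting beyond the supports already committed. (b) \textbf{uniformity of the weak limits} — because we need one $\bm_1$ working for all segments $\nbar$ simultaneously, not one per $\nbar$; this is handled by the standard diagonal argument: enumerate the (countably many) candidate segments, refine for the first, then a subsequence for the second, etc., and diagonalize. I expect step (a), keeping all the supports correctly interleaved as $r\preceq y^{(1)}_\mbar\prec\dots\prec y^{(k)}_\mbar$ uniformly over $\mbar\in[\bm']^k$ while simultaneously controlling the approximation errors, to be the main technical obstacle; the weak-compactness and Ramsey ingredients are routine given reflexivity and the bimonotone basis.
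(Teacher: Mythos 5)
Your overall scheme is the same as the paper's: iteratively peel off one coordinate at a time via weak compactness in $Y$, use a gliding-hump projection to extract a finitely supported block from the difference with the weak limit, diagonalize so one infinite set works for every admissible initial segment, and then recurse on the weak-limit map $\nbar\mapsto z_\nbar$, which is again Lipschitz. The paper phrases this as a formal induction on $k$; you phrase it as an iteration and then unwind, but that is cosmetic.

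There is, however, a genuine flaw in your argument for $\|y_\mbar^{(i)}\|\le\lip(f)$. You first write $\|w_p\|\le\|g(\nbar\cup\{p\})-z_\nbar\|+\vep'\le 2\lip(g)+\vep'$ and then claim a ``small additional argument'' recovers $\lip(g)$ by pushing $\supp(w_p)$ past $\min\supp(z_\nbar)$ and ``exploiting that $z_\nbar$ itself has, up to $\vep'$, finite support.'' This does not work: $z_\nbar$ is a weak limit and in general has no approximate finite support (at the base of the recursion it is the infinite vector $y$), and shifting supports cannot eliminate the factor $2$. The correct argument is simply weak lower semicontinuity of the norm, which you do invoke a few lines later to show $\nbar\mapsto z_\nbar$ is $\lip(g)$-Lipschitz, but fail to apply here. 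Since $d_\bullet(\nbar\cup\{p\},\nbar\cup\{p'\})=1$, one has $\|g(\nbar\cup\{p\})-g(\nbar\cup\{p'\})\|\le\lip(g)$ for all $p\neq p'>\max\nbar$; letting $p'\to\infty$ along the chosen subsequence, $g(\nbar\cup\{p\})-g(\nbar\cup\{p'\})\rightharpoonup g(\nbar\cup\{p\})-z_\nbar$, so $\|g(\nbar\cup\{p\})-z_\nbar\|\le\lip(g)$ outright. Moreover, $w_p$ should be \emph{defined} as the exact projection $(P_s-P_N)\bigl(g(\nbar\cup\{p\})-z_\nbar\bigr)$ rather than a mere $\vep'$-approximant, so that bimonotonicity ($\|P_s-P_N\|\le 1$) yields $\|w_p\|\le\lip(g)$ with no $\vep'$ loss; otherwise the stated inequality $\|y_\mbar^{(i)}\|\le\lip(f)$ is not what you prove. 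With these two corrections your proposal matches the paper's proof.
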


\begin{proof} We will show this result by induction on $k$. The proof is based on a standard gliding hump procedure and a few  diagonal arguments that we have chosen to detail. For $n\in \bn$, we shall denote $P_n$ the basis projection on the linear span of $\{e_1,\dots,e_n\}$.

For $k=1$ assume that we are given $r \in \bn$, $\bm\in[\bn]^\omega$, a Lipschitz map $f\colon (\bm,d_{\bullet}) \to Y$, and $\vep>0$. Since $f$ is bounded and $Y$ is reflexive, by weak compactness, there exists $y\in Y$ and $\bm_0\in[\bm]^\omega$ such that $(f(m))_{m\in \bm_0}$ converges weakly to $y$. From the weak lower semi-continuity of the norm of $Y$ we deduce that  $\|f(m)-y\|\le \lip(f)$ for all $m\in\bm_0$. Since $(f(m)-y)_{m\in\bm_0}$ is weakly null, one can find $m_1\in\bm_0$ such that
$$  \|P_r(f(m)-y)\|<\frac{\vep}{2}, \text{ for all $m\in\bm_0$, with $m\ge m_1$}.$$
Then for all for all $m\ge m_1$, $m\in\bm_0$, there exists $s_m>r$ such that
$$\|(I-P_{s_m})(f(m)-y)\|<\frac{\vep}{2}.$$
We now set $\bm':=\{m\in\bm_0\colon m\ge m_1\}$ and $y_m^{(1)}:=(P_{s_m}-P_r)(f(m)-y)$, for $m\in \bm'$. For all $m\in \bm'$, $r\preceq y_m^{(1)}$, $y_m^{(1)}$ has finite support and since the basis is bimonotone, $\|y_m^{(1)}\|\le \lip(f)$. Finally, we deduce that $\|f(m)-(y+y_m^{(1)})\|<\vep$, from the triangle inequality.\\
Assume now that our statement is proved for $k\in \bn$, and let $r\in \bn$, a Lipschitz map  $f\colon ([\bm]^{k+1},d_{\bullet})\to Y$, and $\vep>0$ be given. Using that $f([\bm]^{k+1})$ is a bounded subset of the reflexive space $Y$ and a diagonal argument, we infer the existence of $\bm_0\in[\bm]^\omega$ such that for all $\mbar\in[\bm_0]^k$, $(f(\mbar,n))_{n\in\bm_0}$ converges weakly to  some $g(\mbar)\in Y$. Using again that the norm of $Y$ is weakly lower semi-continuous, we have that $\lip(g)\le \lip(f)$. This allows us to apply our induction hypothesis to the map $g\colon [\bm_0]^k\to Y$ and to find $\bm_1\in[\bm_0]^\omega$ and $y\in Y$, such that for all $\mbar\in[\bm_1]^k$ there exist $r\preceq y_{\mbar}^{(1)}\prec y_{\mbar}^{(2)}\prec\dots\prec y_{\mbar}^{(k)}$, all with finite supports, such that for all $i\in\{1,\dots,k\}$ $\|y_{\mbar}^{(i)}\|\le \lip(f)$ and
\begin{equation}
\big\|g(\mbar)-(y+y_{\mbar}^{(1)}+y_{\mbar}^{(2)}+\dots+y_{\mbar}^{(k)})\big\|<\frac{\vep}{3}.
\end{equation}
We now fix $\mbar \in [\bm_1]^k$. Note that the weak lower semi-continuity of the norm implies that for all $n \in \bm_1$, $\|f(\mbar,n)-g(\mbar)\|\le \lip(f)$. Denote $r_{\mbar}=\max(\supp(y_{\mbar}^{(k)}))$. Since  $(f(\mbar,n)-g(\mbar))_{n\in\bm_1}$ is weakly null, there exists $N_{\mbar}\in \bm_1$ such that for all $n\in \bm_1$, $n\ge N_{\mbar}$:
$$\big\|P_{r_{\mbar}}\big(f(\mbar,n)-g(\mbar)\big)\big\|<\frac{\vep}{3}.$$
Then, for all $n\in \bm_1$, $n\ge N_{\mbar}$, there exists $s_{\mbar,n}>r_{\mbar}$ so that
$$\big\|(I-P_{s_{\mbar,n}})\big(f(\mbar,n)-g(\mbar)\big)\big\|<\frac{\vep}{3}.$$
We now set $y_{(\mbar,n)}^{(k+1)}:=(P_{s_{\mbar,n}}-P_{r_{\mbar}})(f(\mbar,n)-g(\mbar))$. We have that for all $n\in \bm_1$, $n\ge N_{\mbar}$: $y_{(\mbar,n)}^{(k+1)}$ has finite support, $y_{\mbar}^k\prec y_{(\mbar,n)}^{(k+1)}$, $\|y_{(\mbar,n)}^{(k+1)}\|\le \lip(f)$ and $$\|f(\mbar,n)-(y+y_{\mbar}^{(1)}+\dots+y_{\mbar}^{(k)}+y_{(\mbar,n)}^{(k+1)})\|<\vep.$$
We conclude the proof with one last extraction argument.\\ 
If $\bm_1=\{m_1, m_2, \dots\}$, we define $\bm':=\{m_1', m_2', \dots\}$ recursively as follows. For all $i\in\bn$, $m_i':=m_{\Phi(i)}$ where $\Phi(1)=1, \Phi(2)=2, \dots, \Phi(k)=k$, and if $\Phi(i)$ has been chosen for $i\ge k$ then $\Phi(i+1)$ is picked such that $\Phi(i+1)>\Phi(i)$ and $m_{\Phi(i+1)}\ge N_{\mbar}$ for all $\mbar$ in the finite set $[\{m_1',m_2',\dots,m_i'\}]^k$. It is now easy to check that $\bm'$, $y$, and for all $\mbar\in [\bm']^{k}$ and $n\in\bm'$, $n>\max(\mbar)$, the vectors with finite support $y_{(\mbar,n)}^{(1)}:=y_{\mbar}^{(1)},\dots,y_{(\mbar,n)}^{(k)}:=y_{\mbar}^{(k)}$, $y_{(\mbar,n)}^{(k+1)}$ satisfy the induction hypothesis for $k+1$.

\end{proof}

\begin{rem}
We have assumed that the basis of $Y$ is bimonotone only for convenience. In the general case, one gets a similar result, with the only difference that $\|y_{\mbar}^{(i)}\|\le C\lip(f)$, where $C$ is  the
 bimonotonicity constant. \end{rem}

\begin{rem}
Proposition \ref{mainprop} is actually true if we only assume that $Y$ is a Banach space with a boundedly complete Schauder finite dimensional decomposition. In that case, we just have to use weak$^*$-compactness instead of weak-compactness.
\end{rem}

Our concentration result for Lipschitz maps with values in $T^*$ follows easily from Proposition \ref{mainprop}. The following theorem is Theorem C when $d_{\bullet}=d_H$.

\begin{thm}[Concentration inequality]\label{CI}
Let $k\in \bn$ and $f\colon ([\bn]^k,d_{\bullet})\to T^*$ be a Lipschitz map. Then there exists $\bm'\in[\bn]^\omega$ such that for all $\mbar,\nbar\in[\bm']^k$ one has
\begin{equation}
\|f(\mbar)-f(\nbar)\|\le 5\lip(f).
\end{equation}
\end{thm}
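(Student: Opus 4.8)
The plan is to invoke Proposition \ref{mainprop} to reduce the norm estimate for $\|f(\mbar)-f(\nbar)\|$ to an estimate on sums of block vectors in $T^*$, and then to exploit the block inequality \eqref{E:2.13} satisfied by $T^*$. First I would apply Proposition \ref{mainprop} to the given Lipschitz map $f\colon([\bn]^k,d_\bullet)\to T^*$ with $\vep = \lip(f)$ (any fixed small multiple of $\lip(f)$ will do) and with the parameter $r$ chosen to be $r=k$; this is legitimate since $T^*$ is reflexive and its unit vector basis $(e^*_j)$ is $1$-unconditional, hence bimonotone. This produces an infinite set $\bm'\in[\bn]^\omega$ and a fixed vector $y\in T^*$ such that for every $\mbar\in[\bm']^k$ we have a block decomposition $k\preceq y_{\mbar}^{(1)}\prec y_{\mbar}^{(2)}\prec\cdots\prec y_{\mbar}^{(k)}$ with $\|y_{\mbar}^{(i)}\|\le\lip(f)$ and $\|f(\mbar)-(y+\sum_{i=1}^k y_{\mbar}^{(i)})\|<\vep$.

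Next, for two vertices $\mbar,\nbar\in[\bm']^k$ I would write
\[
f(\mbar)-f(\nbar) = \Big(\sum_{i=1}^k y_{\mbar}^{(i)}\Big) - \Big(\sum_{i=1}^k y_{\nbar}^{(i)}\Big) + \big(f(\mbar)-(y+\textstyle\sum_i y_{\mbar}^{(i)})\big) - \big(f(\nbar)-(y+\textstyle\sum_i y_{\nbar}^{(i)})\big),
\]
so that $\|f(\mbar)-f(\nbar)\|\le \|\sum_{i=1}^k y_{\mbar}^{(i)}\| + \|\sum_{i=1}^k y_{\nbar}^{(i)}\| + 2\vep$. The point of choosing $r=k$ is that for each $\mbar\in[\bm']^k$ the sequence $(y_{\mbar}^{(i)})_{i=1}^k$ is a block sequence in $T^*$ with $k\le\supp(y_{\mbar}^{(1)})$ and each $y_{\mbar}^{(i)}/\lip(f)\in B_{T^*}$ (when $\lip(f)\ne 0$; the case $\lip(f)=0$ is trivial). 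Hence \eqref{E:2.13} applies after rescaling and gives $\|\sum_{i=1}^k y_{\mbar}^{(i)}\|\le 2\lip(f)$, and likewise for $\nbar$. Combining, $\|f(\mbar)-f(\nbar)\|\le 2\lip(f)+2\lip(f)+2\vep = 4\lip(f)+2\vep$, and taking $\vep$ small enough — say $\vep\le\tfrac12\lip(f)$ — yields the bound $5\lip(f)$.

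There is no serious obstacle here: all the real work is packaged into Proposition \ref{mainprop} and the block estimate \eqref{E:2.13}. The only points requiring a modicum of care are (i) making sure the parameter $r$ in Proposition \ref{mainprop} is taken to be $\ge k$ so that the admissibility/blocking hypothesis $n\le\supp(x_1^*)$ in \eqref{E:2.13} is met, and (ii) handling the degenerate case $\lip(f)=0$ separately (then $f$ is constant and the inequality is trivial), since the rescaling argument divides by $\lip(f)$. One should also note that the same argument works verbatim whether $d_\bullet=d_H$ or $d_\bullet=d_J$, since Proposition \ref{mainprop} is stated for both, which is why the theorem is phrased for $d_\bullet$; the case $d_\bullet=d_H$ is exactly Theorem D. Finally, the constant $5$ is not optimized — one could push $\vep\to 0$ to get $4\lip(f)$ — but $5$ suffices for all the applications and keeps the statement clean.
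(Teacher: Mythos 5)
Your proof is correct and follows essentially the same route as the paper: apply Proposition~\ref{mainprop} with $r=k$ and $\vep=\tfrac12\lip(f)$ (the paper handles the degenerate case by simply assuming $\lip(f)>0$), decompose via the triangle inequality, and apply the block estimate~\eqref{E:2.13} to each block sum. No substantive differences.
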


\begin{proof} We assume, as we may, that $\lip(f)>0$. Then it follows from  Proposition \ref{mainprop} with $Y=T^*$, $r=k$, $\bm=\bn$ and $\vep=\frac12 \lip(f)$ that there exists $\bm'\in[\bn]^\omega$, $y\in T^*$ such that for all $\mbar\in[\bm']^k$ there exist $k\preceq y_{\mbar}^{(1)}\prec y_{\mbar}^{(2)}\prec\dots\prec y_{\mbar}^{(k)}$, all with finite supports, satisfying \eqref{E:support} and \eqref{E:approx}. And hence, for all $\mbar,\nbar\in\bm'$
\begin{align*}
\|f(\mbar)-f(\nbar)\|&\le 2\vep+\|y_{\mbar}^{(1)}+y_{\mbar}^{(2)}+\dots+y_{\mbar}^{(k)}\|+\|y_{\nbar}^{(1)}+y_{\nbar}^{(2)}+\dots+y_{\nbar}^{(k)}\|\\
                             &\le 5\lip(f),
\end{align*}
where for the last inequality we use property \eqref{E:2.13} of $T^*$.
\end{proof}


The concentration inequalities stated above immediately provide an obstruction to equi-coarse embeddability of the Johnson graphs $(J_k^\omega)_{k\ge 1}$ and the Hamming graphs $(H_k^\omega)_{k\ge 1}$. A rescaling argument can be used to provide an obstruction to equi-uniform embeddability of the rescaled metrics.

\begin{cor}\label{C:11}\

(1) The sequence of Hamming graphs $(H_k^\omega)_{k\ge 1}$ (resp. $([\bn]^{k},\frac{d_H}{k})_{k\ge 1}$) do not equi-coarsely (resp. equi-uniformly) embed into $T^*$.

(2)The sequence of Johnson graphs $(J_k^\omega)_{k\ge 1}$ (resp. $([\bn]^{k},\frac{d_J}{k})_{k\ge 1}$) do not equi-coarsely (resp. equi-uniformly) embed into $T^*$.

\end{cor}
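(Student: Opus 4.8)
The plan is to reduce both statements to the concentration inequality (Theorem~\ref{CI}) by a contradiction argument, handling the coarse assertions directly and the equi-uniform assertions by rescaling the metric on the domain. I would treat the Hamming case explicitly; the Johnson case is word-for-word the same with $d_\bullet=d_J$ in place of $d_\bullet=d_H$.

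First, suppose towards a contradiction that $(H_k^\omega)_{k\ge1}$ equi-coarsely embeds into $T^*$, with witnessing maps $f_k\colon H_k^\omega\to T^*$ and non-decreasing moduli $\rho,\omega$ satisfying $\rho\le\rho_{f_k}$, $\omega_{f_k}\le\omega$, $\lim_{t\to\infty}\rho(t)=\infty$, and $\omega(1)<\infty$. The key preliminary observation is that each $f_k$ is Lipschitz with $\lip(f_k)\le\omega(1)$, \emph{uniformly in~$k$}: since the restriction of $d_H$ to $[\bn]^k$ is the graph metric of $H_k^\omega$, any two vertices $\mbar,\nbar$ are joined by a geodesic path whose edges have length~$1$, and since $\|f_k(u)-f_k(v)\|\le\omega_{f_k}(1)\le\omega(1)$ for adjacent $u,v$, summing along the path gives $\|f_k(\mbar)-f_k(\nbar)\|\le\omega(1)\,d_H(\mbar,\nbar)$. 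Applying Theorem~\ref{CI} to $f_k$ yields $\bm'\in[\bn]^\omega$ with $\|f_k(\mbar)-f_k(\nbar)\|\le 5\lip(f_k)\le5\omega(1)$ for all $\mbar,\nbar\in[\bm']^k$. Finally, writing $\bm'=\{p_1<p_2<\cdots\}$, the vertices $\mbar=\{p_1,\dots,p_k\}$ and $\nbar=\{p_{k+1},\dots,p_{2k}\}$ lie in $[\bm']^k$ and differ in all $k$ coordinates, so $d_H(\mbar,\nbar)=k$ (and $d_J(\mbar,\nbar)=\tfrac12|\mbar\symdif\nbar|=k$ in the Johnson case). Hence $\rho(k)\le\rho_{f_k}(k)\le\|f_k(\mbar)-f_k(\nbar)\|\le5\omega(1)$ for every $k$, contradicting $\rho(k)\to\infty$.

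For the equi-uniform statements, suppose $\bigl([\bn]^k,\tfrac{d_H}{k}\bigr)_{k\ge1}$ equi-uniformly embeds into $T^*$, with maps $g_k$ and non-decreasing moduli $\rho,\omega$ satisfying $\lim_{t\to0}\omega(t)=0$ and $\rho(1)>0$. Regarding $g_k$ as a map on $([\bn]^k,d_H)$, the same geodesic-path estimate applies: each edge of $H_k^\omega$ has $\tfrac{d_H}{k}$-length $\tfrac1k$, so $\|g_k(u)-g_k(v)\|\le\omega(\tfrac1k)$ for adjacent $u,v$, whence $\lip(g_k)\le\omega(\tfrac1k)$ with respect to $d_H$. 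Theorem~\ref{CI} then produces $\bm'\in[\bn]^\omega$ with $\|g_k(\mbar)-g_k(\nbar)\|\le5\omega(\tfrac1k)$ for all $\mbar,\nbar\in[\bm']^k$. Taking the disjoint pair $\mbar,\nbar\in[\bm']^k$ with $d_H(\mbar,\nbar)=k$, so that $\tfrac{d_H}{k}(\mbar,\nbar)=1$, we obtain $\rho(1)\le\rho_{g_k}(1)\le\|g_k(\mbar)-g_k(\nbar)\|\le5\omega(\tfrac1k)$ for all $k$; letting $k\to\infty$ contradicts $\rho(1)>0$.

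Since all the substantive work already sits inside Theorem~\ref{CI}, I do not expect a genuine obstacle here. The only two points needing a little care are: extracting a Lipschitz constant controlled uniformly in $k$ (respectively, tending to $0$ as $k\to\infty$), which relies on the domain carrying a graph metric so that per-edge bounds propagate along geodesics; and observing that the subgraph $[\bm']^k$ selected by the concentration inequality still has diameter~$k$, which is immediate because $\bm'$ is infinite.
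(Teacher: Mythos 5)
Your proof is correct and follows essentially the same route as the paper's: deduce a uniform Lipschitz bound from the graph structure (per-edge control propagated along geodesics), invoke Theorem~\ref{CI} to confine $f_k$ to a $5\lip(f_k)$-ball on a full sub-Hamming/Johnson graph, and then pick a pair at distance $k$ (respectively at rescaled distance~$1$) in that subgraph to contradict $\rho(k)\to\infty$ (respectively $\rho(1)>0$ as $\omega(1/k)\to0$). The only cosmetic difference is that you spell out the geodesic argument for the Lipschitz bound, which the paper leaves implicit.
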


\begin{proof} We only treat the case of the Hamming graphs, the case of the Johnson graphs being  similar. Assume by contradiction that $(H_k^\omega)_{k\ge 1}$ equi-coarsely embeds into $T^*$, \ie that there exist   non-decreasing functions $\rho,\omega: [0,\infty)\to[0,\infty)$ and for each $k\in\bn$ a  function $f_k\colon ([\bn]^{k},d_H)\to T^*$, such that $\rho\le \rho_{f_k}$, $\omega_{f_k}\le \omega$, $\lim_{t\to \infty}\rho(t)=\infty$ and $\omega(t)<\infty$, for all $t>0$. Since $d_H$ is the graph distance on $H_k^\omega$, $f_k$ is actually $\omega(1)$-Lipschitz.
By Theorem \ref{CI}, for every $k\ge 1$, there exists $\bm_k\in[\bn]^\omega$ such that
for all $\mbar,\nbar\in[\bm_k]^k$ one has
\begin{equation}
\|f_k(\mbar)-f_k(\nbar)\|\le 5\omega(1).
\end{equation}
If $\mbar$ and $\nbar$ are chosen in $[\bm_k]^k$ such that $d_H(\mbar,\nbar)=k$ then it implies that for every $k\ge 1$, $\rho(k)\le 5\omega(1)$, which contradicts the properties of $\omega$ and $\rho$ for $k$ large enough.

\medskip

Assume now by contradiction that $(\sizeeqk{\bn}, \frac{d_H}{k})_{k\ge 1}$ equi-uniformly embeds into $T^*$, \ie there exist non-decreasing functions $\rho,\omega\colon [0,\infty)\to[0,\infty)$ and for each $k\in\bn$ a function $f_k\colon ([\bn]^{k},\frac{d_H}{k})\to T^*$, such that $\rho\le \rho_{f_k}$, $\omega_{f_k}\le \omega$, $\lim_{t\to 0}\omega(t)=0$ and $\rho(t)>0$, for all $t>0$. Denote  the identity map from $(\sizeeqk{\bn},d_H)$ to $(\sizeeqk{\bn}, \frac{d_H}{k})$ by  $i_k$. Then, $\lip(f_k\circ i_k)=\omega_{f_k\circ i_k}(1)\le \omega(\frac1k)$. By Theorem \ref{CI}, for all $k\ge 1$, there exist $\bm_k\in[\bn]^\omega$ such that $\|f_k(\mbar)-f_k(\nbar)\|\le 5\omega(\frac1k)$, for all $\mbar,\nbar\in [\bm_k]^k$. Therefore, considering again  $\mbar$ and $\nbar$ in $[\bm_k]^k$ such that $d_H(\mbar,\nbar)=k$, we get that for all $k\ge 1$ one has $0<\rho(1)\le5\omega(\frac1k)$, which is a contradiction for large enough $k$.
\end{proof}

\begin{rem}\label{R:4.6}
As previously mentioned, Corollary~B follows from the more involved rigidity phenomenon depicted in Theorem A. Nevertheless, for the reader only interested in the geometry of Hilbert space we include an abridged proof of the $\ell_2$-case. The proof follows simply from Corollary \ref{C:11} and, in the coarse setting, the fact that the Johnson graphs equi-coarsely embeds into $\ell_2$. Indeed,
for $k\in\bn$ consider the map

\begin{align*}
f_k\colon& (\sizeeqk{\bn},d_J)\to \ell_2,\qquad
                \nbar\mapsto \sum_{i=1}^k e_{n_i},
\end{align*}
where $(e_n)_{n=1}^\infty$ denotes the canonical basis of $\ell_2$. Then
\begin{equation}
\|f_k(\mbar)-f_k(\nbar)\|_2=\sqrt{2d_J(\mbar,\nbar)}.
\end{equation}
For our claim in the uniform category define for $k\in\bn$
\begin{align*}
g_k\colon& \Big(\sizeeqk{\bn},\frac{d_J}{k}\Big)\to B_{\ell_2},\quad
                \nbar\mapsto \frac{1}{\sqrt{k}}\sum_{i=1}^k e_{n_i},
\end{align*}
then
\begin{equation}
\|g_k(\mbar)-g_k(\nbar)\|_2=\sqrt{\frac{2d_J(\mbar,\nbar)}{k}},
\end{equation}
and we again invoke Corollary \ref{C:11} to conclude the proof of our claim.
\end{rem}

Reflexivity is usually not preserved under nonlinear embeddings, as witnessed by Ribe's example in \cite{Ribe1984}, where he showed that the two separable Banach spaces $\sumspace{n=1}{\infty}{\ell_{p_n}}{\ell_2}$ and $\sumspace{n=1}{\infty}{\ell_{p_n}}{\ell_2}\oplus\ell_1$, where $(p_n)_{n\ge 1}$ is a sequence in $(1,\infty)$ such that $\lim_{n\to \infty}p_n=1$, are uniformly homeomorphic. Using the Kalton-Randrianarivony concentration inequality, it was shown in \cite{BKL2010} that if $X$ coarse Lipschitz embeds into a reflexive Banach space that is asymptotically uniformly smooth then $X$ must be reflexive.
Actually $X$ must have the Banach-Saks property as recently observed in \cite{BragaSM2017}. Another consequence of the concentration inequality gives that reflexivity is stable under coarse embeddability into Banach spaces possessing the ``$\co$-like'' asymptotic smoothness property of Tsirelson's space $T^*$. This trade-off, in order to preserve reflexivity, between faithfulness of the embedding and smoothness properties of the target space is a rather interesting phenomenon.

\begin{prop}\label{P:12}
Let $X$ be a Banach space. If $X$ coarsely embeds, or $B_X$ uniformly embeds, into $T^*$ then $X$ is reflexive.
\end{prop}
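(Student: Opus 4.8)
The plan is to establish the contrapositive: if $X$ is not reflexive then $X$ does not coarsely embed into $T^*$, and $B_X$ does not uniformly embed into $T^*$. In each case the obstruction is Corollary \ref{C:11}: neither the Hamming graphs $(H_k^\omega)_{k\ge1}$ nor their rescalings $([\bn]^k,\tfrac{d_H}{k})_{k\ge1}$ equi-coarsely, resp.\ equi-uniformly, embed into $T^*$. So it will be enough to exhibit inside $X$ (resp.\ inside $B_X$, using the rescaled graphs) an equi-coarse (resp.\ equi-uniform) copy of this family; composing with the assumed embedding of $X$, resp.\ $B_X$, into $T^*$ then gives a contradiction. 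Since reflexivity and the relevant (equi-)embeddability all pass to closed subspaces, and since a non-reflexive space contains a separable non-reflexive subspace (the closed span of a bounded sequence with no weakly convergent subsequence), we may assume throughout that $X$ is separable and non-reflexive.

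By Rosenthal's $\ell_1$-theorem, $X$ either contains an isomorphic copy of $\ell_1$, or it contains a weakly Cauchy sequence $(x_n)$ that does not converge weakly. If $\ell_1\subset X$, then the map $\nbar\mapsto\sum_{i=1}^k e_{(i,n_i)}$ into $\ell_1$ (via a bijection $\bn\times\bn\to\bn$) satisfies $\|f_k(\mbar)-f_k(\nbar)\|_1=2\,d_H(\mbar,\nbar)$, giving an equi-bi-Lipschitz copy of $(H_k^\omega)_k$ inside $\ell_1\subset X$; the uniform statement follows identically after dividing by $k$, the images then lying in $B_{\ell_1}$. If instead we are in the weakly Cauchy case, put $u_n=x_{n+1}-x_n$: this sequence is weakly null and seminormalized (otherwise $(x_n)$ would be norm-Cauchy, hence weakly convergent), so after normalizing and passing to a subsequence it generates a spreading model $E$. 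When $E$ is not isomorphic to $\co$, Corollary \ref{C:3.2} yields an equi-coarse embedding of $(H_k^\omega)_k$ into $X$, and composing with $X\hookrightarrow T^*$ contradicts Corollary \ref{C:11}.

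The remaining possibility, $E\cong\co$, is the crux. Here one argues that $X$ must in fact contain an isomorphic copy of $\co$: by Rosenthal's structure theory for weakly Cauchy sequences the $\co$-spreading-model situation forces a subsequence of $(x_n)$ whose difference sequence is weakly unconditionally Cauchy, and since the resulting series is weakly unconditionally Cauchy but not unconditionally convergent --- otherwise a subsequence of $(x_n)$ would converge in norm, hence weakly, contradicting the weak non-convergence of $(x_n)$ --- the Bessaga--Pe\l czy\'nski theorem gives $\co\subset X$. As $\co$ is bi-Lipschitz universal for separable metric spaces (Aharoni), applying this to $\bigsqcup_k H_k^\omega$ produces an equi-coarse copy of $(H_k^\omega)_k$ in $\co\subset X$, contradicting Corollary \ref{C:11} once more. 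The uniform statement is handled by the same three-case split with the rescaled graphs: the $\ell_1$- and $\co$-cases automatically land in a ball, while the case $E\not\cong\co$ is done by restricting the embedding of Lemma \ref{L:3.1} to $[\bm]^k$ for large $\bm$ and rescaling by $\varphi_E(k)$, using that $\varphi_E$ is non-decreasing with $\varphi_E(n)/n$ non-increasing to keep the compression and expansion moduli uniformly comparable to the identity on $(0,1]$. I expect the genuinely delicate point to be precisely this $\co$-spreading-model case --- i.e.\ verifying that every non-reflexive separable $X$ falls into one of the three handled scenarios --- which rests on Rosenthal's analysis of weakly Cauchy sequences together with the $\co$-containment criterion of Bessaga--Pe\l czy\'nski; everything else is assembling Corollary \ref{C:11} with the positive embedding results of Section \ref{S:3} and the elementary linear embeddings of the Hamming graphs into $\ell_1$ and $\co$.
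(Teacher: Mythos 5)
Your three-way case split is natural, and the first two cases ($\ell_1\subset X$, and a weakly null difference sequence with non-$\co$ spreading model via Corollary~\ref{C:3.2}) are fine. The gap is in the third case: you claim that when the difference sequence generates only $\co$ spreading models, ``Rosenthal's structure theory for weakly Cauchy sequences'' forces a subsequence of the difference sequence to be weakly unconditionally Cauchy, whence $\co\subset X$ by Bessaga--Pe\l czy\'nski. This implication does not follow. Having a $\co$ spreading model is an \emph{asymptotic} statement: it controls $\big\|\sum_{i=1}^m u_{n_i}\big\|$ only in the regime $m\le n_1$, i.e.\ when the indices used are sparse relative to the length of the sum. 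It does not bound all partial sums of a fixed subsequence, so it gives neither the WUC property nor a contradiction with the (s.s.) alternative in Rosenthal's $\co$-theorem. Tsirelson's space $T^*$ itself illustrates the point: every normalized weakly null sequence there has a $\co$ spreading model, yet no subsequence is equivalent to the unit vector basis of $\co$ and the partial sums $\big\|\sum_{i=1}^m e_i\big\|_{T^*}$ are unbounded. (Of course $T^*$ is reflexive, so it is not a counterexample to your final claim, but it shows your inference from $\co$-spreading-model to WUC is invalid and would need a genuinely different justification.)

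The paper sidesteps this entirely and the argument is much shorter. It does not try to produce an equi-coarse copy of $(H_k^\omega)_k$ inside $X$; it only needs a single ``endpoint'' lower bound compatible with the concentration inequality. By the quantitative form of James' characterization of reflexivity, non-reflexivity of $X$ yields $(x_n)\subset B_X$ with
\begin{equation*}
\Big\|\sum_{i=1}^k x_{n_i}-\sum_{i=k+1}^{2k}x_{n_i}\Big\|\ge \tfrac{k}{2}
\qquad\text{for all }k\ \text{and all }\nbar\in[\bn]^{2k}.
\end{equation*}
Then $\varphi_k(\nbar)=\sum_{i=1}^k x_{n_i}$ is $2$-Lipschitz from $H_k^\omega$ into $X$, and after composing with a hypothetical coarse embedding $f\colon X\to T^*$, Theorem~\ref{CI} produces a full subgraph $[\bm_k]^k$ on which $f\circ\varphi_k$ has diameter $\le 5\,\omega_f(2)$; choosing $\mbar,\nbar\in[\bm_k]^k$ with $\mbar$ entirely below $\nbar$ gives $\rho_f(k/2)\le 5\,\omega_f(2)$, a contradiction for large $k$. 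The uniform case is the same with $\varphi_k$ rescaled by $1/k$. This route avoids the spreading-model analysis (and hence the problematic $\co$-containment claim) altogether. If you want to salvage your approach, you would need either a correct proof that the scenario ``non-reflexive, $\ell_1\not\subset X$, and every relevant difference-sequence spreading model is $\co$'' forces $\co\subset X$, or, more directly, you should replace the spreading-model trichotomy by the James lower bound as above.
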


\begin{proof}
Assume that $X$ is not reflexive. Then, by James' characterization of reflexive spaces \cite{James1972} there exists a sequence $\xn\subset B_X$ such that for all $k\ge 1$ and $\nbar\in[\bn]^{2k}$,
\begin{equation}\label{E:28}
\Big\|\sum_{i=1}^kx_{n_i}-\sum_{i=k+1}^{2k}x_{n_i}\Big\|\ge \frac{k}{2}.
\end{equation}
For every $k\in\bn$, the map $\varphi_k\colon H_k^\omega(\bn) \to X$ defined as $\varphi_k(\nbar)=\sum_{i=1}^kx_{n_i}$, for $\nbar\in[\bn]^k$, is clearly $2$-Lipschitz. Assume by contradiction that there exists a coarse embedding $f\colon X\to T^*$.
It follows again from the properties of graph distances that $f\circ \varphi_k$ is $\omega_{f\circ \varphi_k}(1)$-Lipschitz. Since $\omega_{f\circ \varphi_k}(1)\le \omega_{f}(2)$, it follows from Theorem \ref{CI} that for all $k\ge 1$, there exists $\bm_k\in[\bn]^\omega$, such that $\|f\circ\varphi_k(\mbar)-f\circ\varphi_k(\nbar)\|\le 5\omega_f(2)$, for all $\mbar,\nbar\in [\bm_k]^k$. In particular for all $k\ge 1$, if $\mbar,\nbar\in[\bm_k]^k$ are such that $m_1<m_2<\dots<m_k<n_1<n_2<\dots<n_k$, we obtain by \eqref{E:28} that $\rho_f(\frac{k}{2})\le 5\omega_f(2)$, which is impossible for large enough $k$.

For the proof in the uniform setting, we use the map $\varphi_k\colon H_k^\omega\to B_X$ defined as $\varphi_k(\nbar)=\frac{1}{k}\sum_{i=1}^kx_{n_i}$, which is clearly $\frac2k$-Lipschitz. Assume by contradiction that there exists a uniform embedding $f\colon B_X\to T^*$. Then, $\lip(f\circ \varphi_k)\le \omega_f(\frac2k)$. By Theorem \ref{CI}, for all $k\ge 1$, there exist $\bm_k\in[\bn]^\omega$ such that $\|f\circ\varphi_k(\mbar)-f\circ\varphi_k(\nbar)\|\le 5\omega_f(\frac2k)$, for all $\mbar,\nbar\in [\bm_k]^k$. In particular for all $k\ge 1$,  again if $\mbar,\nbar\in[\bm_k]^k$ are such that $m_1<m_2<\dots<m_k<n_1<n_2<\dots<n_k$, we get by \eqref{E:28} that  $0<\rho_f(\frac{1}{2})\le5\omega_f(\frac2k)$, which is again impossible if $k$ is sufficiently large.
\end{proof}
\begin{rem}
The conclusion of Proposition \ref{P:12} can be slightly strengthened as already observed in \cite{BragaSM2017}. Indeed, using \cite[Proposition 2, p. 273]{Beauzamy1979} one can show that $X$ has the Banach-Saks property.
\end{rem}

Recall that Tsirelson's space $T^*$ has the following important properties:
\begin{itemize}
\item $T^*$ is reflexive.
\item $T^*$ does not contain isomorphic copies of $\ell_p$, for any $p\in[1,\infty)$ nor of $\co$.
\item All the spreading models of $T^*$ are isomorphic to $\co$.
\end{itemize}

Theorem A shows that Tsirelson's construction is surprisingly rigid.

\begin{proof}[Proof of Theorem A] Assume first that $g:X\to T^*$ is a coarse embedding. By Corollary \ref{C:11} the sequence $(H_k^\omega)_{k\ge 1}$ does not equi-coarsely embed into $X$. Since Proposition \ref{P:12} insures that $X$ is reflexive, it follows from Rosenthal's $\ell_1$ theorem that every spreading model of $X$ can be generated by a weakly null sequence. Finally it follows from Corollary \ref{C:3.2} that every spreading model of $X$ is isomorphic to $c_0$.

\medskip
Assume now that $g:B_X\to T^*$ is a uniform embedding. Assume also by contradiction that $X$ admits a spreading model which is not isomorphic to $c_0$. By Proposition \ref{P:12}, $X$ is reflexive. Thus, it follows from Rosenthal's $\ell_1$ theorem and Proposition \ref{P:4} that there exists a normalized weakly null sequence $(x_n)_{n=1}^\infty$ in $X$ such that $\lim_{k\to \infty} \psi(k)=\infty$, where
\begin{equation}\label{E:29}
\psi(k)=\inf_{n_1<n_2<\cdots<n_k}\inf_{(\vep_i)_{i=1}^k\in \{-1,1\}}\Big\|\sum_{i=1}^k\vep_i x_{n_i}\Big\|.
\end{equation}
Define now $\varphi_k \colon ([\bn]^{k},\frac{d_J}{k})\to T^*$ by
\begin{equation}
\varphi_k(\nbar)=g\Big(\frac{1}{\psi(2k)}\sum_{i=1}^k x_{n_i}\Big),\ \  \text{for  $\nbar \in [\bn]^{k}$.}
\end{equation}
Note that $\omega_{\varphi_k}\big(\frac{1}{k})\le \omega_g(\frac{2}{\psi(2k)}\big)$. For  every $k\in\bn$  there exists $\bm_k\in[\bn]^\omega$ by  Theorem \ref{CI}, such that
\begin{equation}
\|\varphi_k(\mbar)-\varphi_k(\nbar)\|\le 5\omega_g\Big(\frac{2}{\psi(k)}\Big),\text{ whenever  $\mbar,\nbar\in[\bm_k]^k$.}
\end{equation}
But, it follows from the definition of $\psi$ that $\|\varphi_k(\mbar)-\varphi_k(\nbar)\|\ge \rho_g(1)$, whenever $d_J(\mbar,\nbar)=k$. So for all $k\in \bn$, we have $5\omega_g(\frac{2}{\psi(k)})\ge \rho_g(1)>0$. This contradicts the fact that $\lim_{k\to \infty} \psi(k)=\infty$ and $\lim_{t\to 0}\omega_g(t)=0$.
\end{proof}

\section{Final comments and open problems}
Since $T^*$ clearly has trivial cotype, it follows from Mendel-Naor metric cotype obstruction that $T^*$ is not coarsely embeddable into any Banach space that coarsely embeds into a Banach space with non-trivial type. Therefore, $T^*$ and $\ell_p$, $p\in[1,\infty)$, are coarsely incomparable in the sense that $T^*$ is not coarsely embeddable into $\ell_p$ and $\ell_p$ is not coarsely embeddable into $T^*$. To the best of our knowledge this provides the first pairs of coarsely incomparable Banach spaces.

\begin{cor}\label{C:14}
Assume that $X$ is a Banach space such that
\begin{enumerate}
\item $(H_k^\omega)_{k\ge 1}$ (or $(J_k^\omega)_{k\ge 1}$) equi-coarsely embeds into $X$,
\item $X$ coarsely embeds into a Banach space with non-trivial type.
\end{enumerate}
Then, $X$ and $T^*$ are coarsely incomparable.
\end{cor}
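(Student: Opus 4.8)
The plan is to combine the two hypotheses with the results already established. First I would observe that hypothesis (2) together with the Mendel--Naor metric cotype obstruction \cite{MendelNaor2008} forces $T^*$ not to coarsely embed into $X$: indeed $T^*$ has trivial cotype, while any Banach space that coarsely embeds into a space with non-trivial type must have non-trivial metric cotype, so $T^*$ cannot coarsely embed into it, and by composition of coarse embeddings $T^*$ does not coarsely embed into $X$. This gives one of the two non-embeddings required for coarse incomparability.

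For the other direction, I would use hypothesis (1) and Corollary \ref{C:11}. By Corollary \ref{C:11}, the sequence $(H_k^\omega)_{k\ge 1}$ (respectively $(J_k^\omega)_{k\ge 1}$) does not equi-coarsely embed into $T^*$. On the other hand, equi-coarse embeddability is transitive: if $(H_k^\omega)_{k\ge 1}$ equi-coarsely embeds into $X$ via maps with common moduli $\rho,\omega$, and $X$ coarsely embeds into $T^*$ via a map $h$ with $\rho_h(t)\to\infty$ and $\omega_h(t)<\infty$, then the composite maps $h\circ f_k$ witness an equi-coarse embedding of $(H_k^\omega)_{k\ge1}$ into $T^*$ with moduli $\rho_h\circ\rho$ and $\omega_h\circ\omega$ (using that $\rho_h$ is non-decreasing and $\rho_h(t)\to\infty$, so $\rho_h\circ\rho$ still tends to infinity). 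This contradicts Corollary \ref{C:11}, hence $X$ does not coarsely embed into $T^*$.

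Putting the two halves together: $T^*$ does not coarsely embed into $X$ and $X$ does not coarsely embed into $T^*$, which is precisely the assertion that $X$ and $T^*$ are coarsely incomparable. The argument for the Johnson graphs is identical, invoking part (2) of Corollary \ref{C:11} instead of part (1).

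I do not expect any real obstacle here; the only point requiring a little care is the transitivity of equi-coarse embeddability, namely checking that composing with a fixed coarse embedding preserves the common lower and upper moduli in the correct monotone way (in particular that $\lim_{t\to\infty}\rho_h(\rho(t))=\infty$). Everything else is a direct citation of Corollary \ref{C:11} and the Mendel--Naor metric cotype theorem.
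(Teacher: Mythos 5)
Your proof is correct and matches the approach the paper intends: the Mendel--Naor metric cotype obstruction (via trivial cotype of $T^*$ plus hypothesis (2)) rules out $T^*\hookrightarrow X$, and transitivity of (equi-)coarse embeddability together with Corollary \ref{C:11} and hypothesis (1) rules out $X\hookrightarrow T^*$. The paper states the corollary without a displayed proof, but the preceding paragraph gives exactly this argument in the special case $X=\ell_p$, and your verification of the composition of moduli ($\rho_h\circ\rho$, $\omega_h\circ\omega$) is the only detail one needs to check.
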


\medskip

Let us conclude with a few open questions.

\begin{pb}\label{P:1}
Does $\ell_2$ coarsely embed into every super-reflexive Banach space?
\end{pb}
In view of Ostrovskii's result  \cite{Ostrovskii2009}, which we referenced in the introduction, a counter-example to Problem \ref{P:1} would have to be a Banach space that is super-reflexive not containing any unconditional basic sequence.
It seems that the only known such space is Ferenczi's space \cite{Ferenczi1997}.

\begin{pb}
Does $\ell_2$ coarsely embed into any Banach space which has a spreading
model that is not equivalent to $\co$?
\end{pb}

\medskip\noindent
{\bf Acknowledgements.} This work was completed while the second named author was visiting Texas A\&M University in College Station. He wishes to thank the Mathematics Department of Texas A\&M University for its warm hospitality and the excellent working environment.

\begin{bibsection}
\begin{biblist}
\bib{Aharoni1974}{article}{
  author={Aharoni, I.},
  title={Every separable metric space is Lipschitz equivalent to a subset of $c\sp {+}\sb {0}$},
  journal={Israel J. Math.},
  volume={19},
  date={1974},
  pages={284--291},
}

\bib{AJO2005}{article}{
  author={Alspach, D.},
  author={Judd, R.},
  author={Odell, E.},
  title={The {S}zlenk index and local {$l_1$}-indices},
  journal={Positivity},
  fjournal={Positivity. An International Journal devoted to the Theory and Applications of Positivity in Analysis},
  volume={9},
  year={2005},
  number={1},
  pages={1--44},
}

\bib{Baudier2007}{article}{
  author={Baudier, F.},
  title={Metrical characterization of super-reflexivity and linear type of Banach spaces},
  journal={Arch. Math.},
  volume={89},
  date={2007},
  pages={419\ndash 429},
}

\bib{BaudierJTA2016}{article}{
  author={Baudier, F.},
  title={Quantitative nonlinear embeddings into Lebesgue sequence spaces},
  journal={J. Topol. Anal.},
  volume={8},
  date={2016},
  number={1},
  pages={117-150},
}

\bib{BKL2010}{article}{
  author={Baudier, F.},
  author={Kalton, N. J.},
  author={Lancien, G.},
  title={A new metric invariant for Banach spaces},
  journal={Studia Math.},
  volume={199},
  date={2010},
  pages={73-94},
}

\bib{BaudierLancien2015}{article}{
  author={Baudier, F.},
  author={Lancien, G.},
  title={Tight embeddability of proper and stable metric spaces},
  journal={Anal. Geom. Metr. Spaces},
  volume={3},
  date={2015},
  pages={140\ndash 156},
}

\bib{Beauzamy1979}{article}{
  author={Beauzamy, B.},
  title={Banach-Saks properties and spreading models},
  journal={Math. Scand.},
  volume={44},
  date={1979},
  pages={357--384},
}

\bib{BeauzamyLapreste1984}{book}{
  author={Beauzamy, B.},
  author={Laprest\'e, J.-T.},
  title={Mod\`eles \'etal\'es des espaces de {B}anach},
  series={Travaux en Cours. [Works in Progress]},
  publisher={Hermann, Paris},
  year={1984},
  pages={iv+210},
  isbn={2-7056-5965-X},
  mrclass={46B20},
  mrnumber={770062},
  mrreviewer={G. J. O. Jameson},
}

\bib{BragaSM2017}{article}{
  author={Braga, B. M.},
  title={Asymptotic structure and coarse {L}ipschitz geometry of {B}anach spaces},
  journal={Studia Math.},
  fjournal={Studia Mathematica},
  volume={237},
  year={2017},
  number={1},
  pages={71--97},
}

\bib{BrunelSucheston1974}{article}{
  author={Brunel, A.},
  author={Sucheston, L.},
  title={On $B$-convex Banach spaces},
  journal={Math. Systems Theory},
  volume={7},
  date={1974},
  pages={294\ndash 299},
}

\bib{Dvoretzky1961}{article}{
  author={Dvoretzky, A.},
  title={Some results on convex bodies and Banach spaces},
  booktitle={Proc. Internat. Sympos. Linear Spaces (Jerusalem, 1960)},
  pages={123\ndash 160},
  publisher={Jerusalem Academic Press},
  place={Jerusalem},
  date={1961},
}

\bib{Ferenczi1997}{article}{
  author={Ferenczi, V.},
  title={A uniformly convex hereditarily indecomposable {B}anach space},
  journal={Israel J. Math.},
  fjournal={Israel Journal of Mathematics},
  volume={102},
  year={1997},
  pages={199--225},
}

\bib{FerryRanickiRosenberg1995}{incollection}{
  author={Ferry, S. C.},
  author={Ranicki, A.},
  author={Rosenberg, J.},
  title={A history and survey of the {N}ovikov conjecture},
  booktitle={Novikov conjectures, index theorems and rigidity, {V}ol.\ 1 ({O}berwolfach, 1993)},
  series={London Math. Soc. Lecture Note Ser.},
  volume={226},
  pages={7--66},
  publisher={Cambridge Univ. Press, Cambridge},
  year={1995},
  mrclass={57R67 (01A60 57R20 58G10)},
  mrnumber={1388295 (97f:57036)},
  mrreviewer={Oliver Attie},
  url={http://dx.doi.org/10.1017/CBO9780511662676.003},
}

\bib{FigielJohnson1974}{article}{
  author={Figiel, T.},
  author={Johnson, W. B.},
  title={A uniformly convex Banach space which contains no $l\sb {p}$},
  journal={Compositio Math.},
  volume={29},
  date={1974},
  pages={179\ndash 190},
}

\bib{GLZ2014}{article}{
  author={Godefroy, G.},
  author={Lancien, G.},
  author={Zizler, V.},
  title={The non-linear geometry of {B}anach spaces after {N}igel {K}alton},
  journal={Rocky Mountain J. Math.},
  fjournal={The Rocky Mountain Journal of Mathematics},
  volume={44},
  year={2014},
  number={5},
  pages={1529--1583},
}

\bib{James1950}{article}{
  author={James, R. C.},
  title={Bases and reflexivity of Banach spaces},
  journal={Ann. of Math. (2)},
  volume={52},
  date={1950},
  pages={518\ndash 527},
}

\bib{James1972}{article}{
  author={James, R. C.},
  title={Some self-dual properties of normed linear spaces},
  booktitle={Symposium on Infinite-Dimensional Topology (Louisiana State Univ., Baton Rouge, La., 1967)},
  pages={159\ndash 175. Ann. of Math. Studies, No. 69},
  publisher={Princeton Univ. Press},
  place={Princeton, N.J.},
  date={1972},
}

\bib{JohnsonRandrianarivony2006}{article}{
  author={Johnson, W. B.},
  author={Randrianarivony, N. L.},
  title={$l\sb p\ (p>2)$ does not coarsely embed into a Hilbert space},
  journal={Proc. Amer. Math. Soc.},
  volume={134},
  date={2006},
  pages={1045--1050 },
}

\bib{Kalton2007}{article}{
  author={Kalton, N. J.},
  title={Coarse and uniform embeddings into reflexive spaces},
  journal={Quart. J. Math. (Oxford)},
  volume={58},
  date={2007},
  pages={393\ndash 414},
}

\bib{KaltonFM11}{article}{
  author={Kalton, N. J.},
  title={Lipschitz and uniform embeddings into {$\ell _\infty $}},
  journal={Fund. Math.},
  volume={212},
  year={2011},
  number={1},
  pages={53--69},
}

\bib{KaltonRandrianarivony2008}{article}{
  author={Kalton, N. J.},
  author={Randrianarivony, N. L.},
  title={The coarse Lipschitz structure of $\ell _p\oplus \ell _q$},
  journal={Math. Ann.},
  volume={341},
  date={2008},
  pages={223--237},
}

\bib{KaltonWerner1995}{article}{
  author={Kalton, N. J.},
  author={Werner, D.},
  title={Property $(M)$, $M$-ideals, and almost isometric structure of Banach spaces},
  journal={J. Reine Angew. Math.},
  volume={461},
  date={1995},
  pages={137\ndash 178},
}

\bib{KasparovYu2006}{article}{
  author={Kasparov, G.},
  author={Yu, G.},
  title={The coarse geometric Novikov conjecture and uniform convexity},
  journal={Adv. Math.},
  volume={206},
  date={2006},
  pages={1--56},
}

\bib{MendelNaor2008}{article}{
  author={Mendel, M.},
  author={Naor, A.},
  title={Metric cotype},
  journal={Ann. of Math.(2)},
  volume={168},
  date={2008},
  pages={247\ndash 298},
}

\bib{Naor2012}{article}{
  author={Naor, A.},
  title={An introduction to the {R}ibe program},
  journal={Jpn. J. Math.},
  fjournal={Japanese Journal of Mathematics},
  volume={7},
  year={2012},
  number={2},
  pages={167--233},
  issn={0289-2316},
  mrclass={46-02 (46B07 46B80 46B85)},
  mrnumber={2995229},
  url={http://dx.doi.org/10.1007/s11537-012-1222-7},
}

\bib{NaorAnnals2016}{article}{
  author={Naor, A.},
  title={Discrete {R}iesz transforms and sharp metric {$X_p$} inequalities},
  journal={Ann. of Math. (2)},
  fjournal={Annals of Mathematics. Second Series},
  volume={184},
  year={2016},
  number={3},
  pages={991--1016},
}

\bib{NSFourier2016}{article}{
  author={Naor, A.},
  author={Schechtman, G.},
  title={Pythagorean powers of hypercubes},
  journal={Ann. Inst. Fourier (Grenoble)},
  volume={66},
  year={2016},
  number={3},
  pages={1093--1116},
}

\bib{NSPi2016}{article}{
  author={Naor, A.},
  author={Schechtman, G.},
  title={Metric {$X_p$} inequalities},
  journal={Forum Math. Pi},
  volume={4},
  year={2016},
  pages={e3, 81pp},
}

\bib{Nowak2006}{article}{
    AUTHOR = {Nowak P. W.},
     TITLE = {On coarse embeddability into {$l_p$}-spaces and a
              conjecture of {D}ranishnikov},
   JOURNAL = {Fund. Math.},
    VOLUME = {189},
      YEAR = {2006},
    NUMBER = {2},
     PAGES = {111--116},
}

\bib{NowakYu2012}{book}{
  author={Nowak, P.},
  author={Yu, G.},
  title={Large scale geometry},
  series={EMS Textbooks in Mathematics},
  publisher={European Mathematical Society (EMS), Z\"urich},
  year={2012},
  pages={xiv+189},
}

\bib{OdellSchlumprecht1994}{article}{
  author={Odell, E.},
  author={Schlumprecht, T.},
  title={The distortion problem},
  journal={Acta Math.},
  volume={173},
  date={1994},
  pages={259\ndash 281},
}

\bib{OSZ2007}{article}{
  author={Odell, E.},
  author={Schlumprecht, Th.},
  author={Zs\'ak, A.},
  title={Banach spaces of bounded {S}zlenk index},
  journal={Studia Math.},
  fjournal={Studia Mathematica},
  volume={183},
  year={2007},
  number={1},
  pages={63--97},
  mrnumber={2360257},
}

\bib{Ostrovskii2009}{article}{
  author={Ostrovskii, M. I.},
  title={Coarse embeddability into {B}anach spaces},
  journal={Topology Proc.},
  volume={33},
  year={2009},
  pages={163--183},
}

\bib{Ostrovskiibook}{book}{
  author={Ostrovskii, M. I.},
  title={Metric embeddings},
  series={De Gruyter Studies in Mathematics},
  volume={49},
  note={Bilipschitz and coarse embeddings into Banach spaces},
  publisher={De Gruyter, Berlin},
  year={2013},
  pages={xii+372},
  isbn={978-3-11-026340-4; 978-3-11-026401-2},
  mrclass={46-01 (46-02 46B85)},
  mrnumber={3114782},
  url={http://dx.doi.org/10.1515/9783110264012},
}

\bib{Ribe1984}{article}{
  author={Ribe, M.},
  title={Existence of separable uniformly homeomorphic nonisomorphic Banach spaces},
  journal={Israel J. Math.},
  volume={48},
  date={1984},
  pages={139--147},
}

\bib{Rosenberg2016}{book}{
  author={Rosenberg, J.},
  title={Novikov's conjecture},
  booktitle={Open problems in mathematics},
  pages={377--402},
  publisher={Springer, [Cham]},
  year={2016},
  mrclass={19Lxx (46L05 57N65)},
  mrnumber={3526942},
}

\bib{Tsirelson1974}{article}{
  author={Tsirel{\cprime }son, B. S.},
  title={It is impossible to imbed $l\sb {p}$ of $c\sb {0}$ into an arbitrary Banach space},
  language={Russian},
  journal={Funkcional. Anal. i Prilo\v zen.},
  volume={8},
  date={1974},
  pages={57\ndash 60},
}

\bib{Valette2002}{book}{
  author={Valette, A.},
  title={Introduction to the {B}aum-{C}onnes conjecture},
  series={Lectures in Mathematics ETH Z\"urich},
  note={From notes taken by Indira Chatterji, With an appendix by Guido Mislin},
  publisher={Birkh\"auser Verlag, Basel},
  year={2002},
  pages={x+104},
  isbn={3-7643-6706-7},
  mrclass={58J22 (19K35 22D25 46L80 46L87)},
  mrnumber={1907596 (2003f:58047)},
  mrreviewer={Paul D. Mitchener},
  doi={10.1007/978-3-0348-8187-6},
  url={http://dx.doi.org/10.1007/978-3-0348-8187-6},
}

\bib{Yu2000}{article}{
  author={Yu, G.},
  title={The coarse Baum-Connes conjecture for spaces which admit a uniform embedding into Hilbert space},
  journal={Invent. Math.},
  volume={139},
  date={2000},
  pages={201--240},
}
\end{biblist}
\end{bibsection}

\end{document}